\documentclass[reqno,12pt]{amsart}
\usepackage{amsfonts}
\usepackage{bbm}
\usepackage{} %leqno is the option to put formula numbers on the left side
\setlength{\textheight}{23cm}
\setlength{\textwidth}{16cm}
\setlength{\oddsidemargin}{0cm}
\setlength{\evensidemargin}{0cm}
\setlength{\topmargin}{0cm}
\numberwithin{equation}{section}
\usepackage{indentfirst}
 \usepackage{color}
\usepackage{amssymb}
\usepackage{mathrsfs}
\usepackage{xy}
\xyoption{all}
\def\Ext{\mbox{\rm Ext}\,} \def\Hom{\mbox{\rm Hom}} \def\dim{\mbox{\rm dim}\,} \def\Iso{\mbox{\rm Iso}\,}
\def\lr#1{\langle #1\rangle}    \def\Tor{\mbox{\rm Tor}\,}\def\mod{\mbox{\rm \textbf{mod}}\,}
\def\Ker{\mbox{\rm Ker}\,}   \def\im{\mbox{\rm Im}\,}  \def\ra{\rightarrow}
\def\End{\mbox{\rm End}\,}\def\C{\mathcal {C}}
 \def\gl.{\mbox{\rm gl.}\,}

\def\A{\mathcal{A}\,} 

\def\bbz{{\mathbb Z}}
%
 %changing the interline spacing
%
%%%%%%%%% Theorem-like environment %%%%%%%%%%%
%
\theoremstyle{plain} %text of this environment is typesetted in italics
\newtheorem{theorem}{\bf Theorem}[section]
\newtheorem{lemma}[theorem]{\bf Lemma}
\newtheorem{corollary}[theorem]{\bf Corollary}
\newtheorem{proposition}[theorem]{\bf Proposition}

\theoremstyle{definition} %text of this environment is typesetted in roman letters
\newtheorem{definition}[theorem]{\bf Definition}
\newtheorem{remark}[theorem]{\bf Remark}
\newtheorem{example}[theorem]{\bf Example}

\newcommand{\bt}{\begin{theorem}}
\newcommand{\et}{\end{theorem}}
\newcommand{\bl}{\begin{lemma}}
\newcommand{\el}{\end{lemma}}
\newcommand{\bd}{\begin{definition}}
\newcommand{\ed}{\end{definition}}
\newcommand{\bc}{\begin{corollary}}
\newcommand{\ec}{\end{corollary}}
\newcommand{\bp}{\begin{proof}}
\newcommand{\ep}{\end{proof}}
\newcommand{\bx}{\begin{example}}
\newcommand{\ex}{\end{example}}
\newcommand{\br}{\begin{remark}}
\newcommand{\er}{\end{remark}}
\newcommand{\be}{\begin{equation}}
\newcommand{\ee}{\end{equation}}
\newcommand{\ba}{\begin{align}}
\newcommand{\ea}{\end{align}}
\newcommand{\bn}{\begin{enumerate}}
\newcommand{\en}{\end{enumerate}}
\newcommand{\bcs}{\begin{cases}}
\newcommand{\ecs}{\end{cases}}
%
%If a theorem-like environment should not be numbered,
%add * after \newtheorem, and delete the counter option such as [theorem].

%
%%%%% Proof %%%%%
 % Name is small caps
%The following commands are available in the proof environment:
%\begin{proof}
%\end{proof}
%The end of a proof is marked with a square.
%%%%%%%%%%%%%%%%%%%%%%%%%%%%%%%%%%%%%%%%%
\makeatletter
\renewcommand{\section}{\@startsection{section}{1}{0mm}
  {-\baselineskip}{0.5\baselineskip}{\bf\leftline}}
\makeatother

\begin{document}

\title[Multiplicative properties in Bridgeland's Hall algebras]{Multiplicative properties of certain elements\\ in Bridgeland's Hall algebras} %title of paper and the running head option

\author{Qinghua Chen and Haicheng Zhang$^{*}$} %first author's name and the running head option

%\dedicatory{Dedicated to Professor Xxx Yyy on his sixtieth birthday}

%%%%%%%%%%%%%%% footnote %%%%%%%%%%%%%%%%
\subjclass[2010]{ %2010 MSC numbers
16G20, 17B20, 17B37.
}
\keywords{ %key words and phrases
Multiplicative property; Ringel--Hall algebra; Bridgeland's Hall algebra.
}

\thanks{$*$~Corresponding author. Supported by the National Natural Science Foundation of
China (Grant No.11701090) and the Natural Science Foundation of Fujian Province of China (Grant No.2017J05004).}
%%%%%%%%%%%% Authors addresses %%%%%%%%%%%%%

\address{
College of Mathematics and Computer Science, Fu Zhou University, Fu Zhou 350108, P. R. China.\endgraf
}
\email{chenqh@mail.bnu.edu.cn}

\address{% First Author
Institute of Mathematics, School of Mathematical Sciences, Nanjing Normal University,
 Nanjing 210023, P. R. China.\endgraf
}
\email{zhanghai14@mails.tsinghua.edu.cn}

%%%%%%%%%%%%%%%%%%%%%%%%%%%%%%%%%%%%%%%%%

\maketitle

\begin{abstract}
Let $A$ be a finite dimensional algebra of finite global dimension over a finite field. In the present paper, we introduce certain elements in Bridgeland's Hall algebra of $A$, and give a multiplication theorem of these elements. In particular, this generalizes the main result in \cite{GP}.
\end{abstract}

\section{Introduction}
The Hall algebra $\mathfrak{H}(A)$ of a finite dimensional algebra $A$ over a finite field was introduced by Ringel \cite{R90a} in 1990. Ringel \cite{R90,R90a} proved that if $A$ is representation-finite and hereditary, the twisted Hall algebra $\mathfrak{H}_{v}(A)$, called the Ringel--Hall algebra, is isomorphic to the positive part of the corresponding quantized enveloping algebra.  By introducing a bialgebra structure on $\mathfrak{H}_{v}(A)$,
Green \cite{Gr95} generalized Ringel's work to an arbitrary finite dimensional hereditary algebra $A$
and showed that the composition subalgebra of $\mathfrak{H}_{v}(A)$ generated by simple $A$-modules
gives a realization of the positive part of the quantized enveloping algebra associated with $A$. In \cite{Xiao}, Xiao gave a realization of the whole quantized enveloping algebra by constructing the Drinfeld double of the extended Ringel--Hall algebra of a hereditary algebra.

In order to give an intrinsic realization of the entire quantized enveloping algebra via Hall algebra approaches, one has managed to define the Hall algebra of a triangulated category satisfying some homological finiteness conditions (for example, \cite{Kapranov}, \cite{Toen}, \cite{XiaoXu}). Unfortunately, the root category of a finite dimensional algebra does not satisfy the homological finiteness conditions. In other word, the Hall algebra of a root category has no a befitting definition. In fact, more generally, the Hall algebra of an odd periodic triangulated category with some finiteness conditions has been defined (see \cite{XC}). Nevertheless, an applicable definition of the Hall algebra of an even periodic triangulated category has been unknown so far.

In 2013, for each finite dimensional algebra $A$ of finite global dimension, Bridgeland \cite{Br} introduced an algebra, called the (reduced) Bridgeland's Hall algebra of \emph{A}, which is the Ringel--Hall algebra of $2$-cyclic complexes over projective $A$-modules with some localization (and reduction). He proved that for any hereditary algebra \emph{A} the quantized enveloping algebra associated to \emph{A} can be embedded into the reduced Bridgeland's Hall algebra of $A$. This provides a beautiful realization of the full quantized enveloping algebra by Hall algebras. In \cite{Br}, Bridgeland also showed that for the hereditary algebra \emph{A} there exists an embedding of algebras from the Ringel--Hall algebra to Bridgeland's Hall algebra. Later on, Geng and Peng \cite{GP} generalized this embedding to algebras of global dimension at most two. So far, the structure of Bridgeland's Hall algebra of a nonhereditary algebra has been rarely known. Particularly, the relations between the Ringel--Hall algebra and Bridgeland's Hall algebra of any algebra with finite global dimension have not been uncovered.

In this paper, let $k$ be a finite field and $\A$ be an abelian $k$-category with enough projectives, which is of finite global dimension. We introduce certain elements in Bridgeland's Hall algebra of $\A$, and then give a sufficient and necessary condition for the multiplicative properties of these elements. As applications, we obtain \cite[Theorem 3.7]{GP}. Moreover, we generalize it to $2$th tilted algebras.

Throughout the paper, let $k$ be the finite field with $q$ elements and set $v=\sqrt{q}\in \mathbb{C}$. Let $\A$ be an abelian (small) $k$-category of finite global dimension, and assume that $\A$ has enough projectives. We denote by $\Iso(\A)$ the set of isomorphism classes of objects in $\A$, denote by $K(\A)$ the Grothendieck group of $\A$, and denote by $\mathscr{P}$ the full subcategory of $\A$ consisting of projective objects. For a complex $M_\bullet=\cdots\longrightarrow M_i\longrightarrow M_{i+1}\longrightarrow\cdots$ of $\A$, its homology is denoted by $H_\ast(M_\bullet)$. For an object $M$ in $\A$, the class of $M$ in $K(\A)$ is denoted by $\hat{M}$.
Let $A$ be a finite dimensional $k$-algebra of finite global dimension and denote by $\mod A$ the category of finite dimensional (left) $A$-modules. For a finite set $S$, we denote by $|S|$ its cardinality. For each rational number $x$, $[x]$ is defined to be the largest integer not greater than $x$. We denote the quotient ring $\mathbb{Z}/2\mathbb{Z}$ by $\mathbb{Z}_2=\{0,1\}$.

\section{Preliminaries}
In this section, we recall the definitions and some necessary results of $2$-cyclic complexes, Ringel--Hall algebras and Bridgeland's Hall algebras. All of the materials can be
found in \cite{Br}, \cite{ZHC0,ZHC1,ZHC2}.
\subsection{2-cyclic complexes}
Let $\mathcal{C}_2(\mathcal{A})$ be the abelian category of 2-cyclic complexes over $\mathcal{A}$. The objects of this category consist of diagrams
\begin{equation*}M_\bullet=\xymatrix{{M_1}\ar@<0.7ex>[r]^{d_1^M}&{M_0}\ar@<0.7ex>[l]^{d_0^M}}\end{equation*}
in $\mathcal{A}$ such that $d_{i+1}^M\circ d_{i}^M=0, ~i\in \mathbb{Z}_2$.
A morphism $s_\bullet: M_\bullet \rightarrow N_\bullet$ consists of a diagram
\begin{equation*}\xymatrix{{M_1}\ar@<0.7ex>[r]^{d_1^M}\ar[d]_{s_1}&{M_0}\ar@<0.7ex>[l]^{d_0^M}\ar[d]^{s_0}\\
{N_1}\ar@<0.7ex>[r]^{d_1^N}&{N_0}\ar@<0.7ex>[l]^{d_0^N}}\end{equation*}
with $s_{i+1}\circ d_i^M=d_i^N\circ s_i, ~i\in \mathbb{Z}_2$.
Two morphisms $s_\bullet, t_\bullet : M_\bullet \rightarrow N_\bullet$ are said to be \emph{homotopic} if there are morphisms $h_i : M_i \rightarrow N_{i+1},~i\in \mathbb{Z}_2,$ such that $t_i-s_i={d_{i+1}^N}\circ h_i+{h_{i+1}}\circ d_i^M,~i\in \mathbb{Z}_2$.
For an object $M_\bullet\in\mathcal{C}_2(\mathcal{A})$, we define its class in the Grothendieck group $K(\mathcal{A})$ to be
$$\hat{M_\bullet} := \hat{M_0}-\hat{M_1}\in K(\mathcal{A}).$$
Denote by ${\mathcal {K}}_2(\mathcal{A})$ the homotopy category obtained from $\mathcal{C}_2(\mathcal{A})$ by identifying homotopic morphisms. Let $\mathcal{C}_2(\mathscr{P})\subset\mathcal{C}_2(\A)$ be the full subcategory whose objects are complexes of projectives, and denote by ${\mathcal {K}}_2(\mathscr{P})$ its homotopy category.
The shift functor of complexes induces an involution of $\mathcal{C}_2(\mathcal{A})$. This involution
shifts the grading and changes the signs of differentials as follows
\[{M_ \bullet } = \xymatrix{{M_1}\ar@<0.7ex>[r]^{d_1^M}&{M_0}\ar@<0.7ex>[l]^{d_0^M}}\overset {*} \longleftrightarrow M_ \bullet ^* = \xymatrix{{M_0}\ar@<0.7ex>[r]^{-d_0^M}&{M_1}\ar@<0.7ex>[l]^{-d_1^M}.}\]

It is well-known that there exists an exact functor $\pi: \mathcal{C}^b(\mathcal{A})\longrightarrow\mathcal{C}_2(\mathcal{A})$, sending a complex $(M_i)_{i\in\mathbb{Z}}$ to the 2-cyclic complex
$$\xymatrix{{\bigoplus\limits_{i\in\mathbb{Z}}M_{2i+1}}\ar@<0.7ex>[r]&{\bigoplus\limits_{i\in\mathbb{Z}}M_{2i}}\ar@<0.7ex>[l]}$$
with the naturally defined differentials. It is easy to check that
$$\Hom_{\mathcal{C}_2(\mathcal{A})}(\pi(M_\bullet),\pi(N_\bullet))\cong\bigoplus\limits_{i\in\mathbb{Z}}\Hom_{\mathcal{C}^b(\mathcal{A})}(
M_\bullet,N_\bullet[2i]).$$

Let $\mathcal{D}^b(\A)$ be the bounded derived category of $\A$ with the suspension functor $[1]$. Let $\mathcal{R}_2(\A)$ = $\mathcal{D}^b(\A)/[2]$ be the orbit category, also known as the root category of $\mathcal{A}$. The category $\mathcal{D}^b(\A)$ is equivalent to the bounded homotopy category $K^b(\mathscr{P})$, since $\A$ is of finite global dimension. In this case, we can equally well define  $\mathcal{R}_2(\A)$ as the orbit category of $K^b(\mathscr{P})$.

\begin{lemma}{\rm(\cite{PX97}, \cite[Lemma 3.1]{Br})}\label{fully faithful}
There is a fully faithful functor $\mathcal {F} : \mathcal{R}_2(\A)\rightarrow {\mathcal {K}}_2(\mathscr{P})$ sending a bounded complex of projectives $(P_i)_{i \in \mathbb{Z}}$ to the $2$-cyclic complex
$$\xymatrix{{\bigoplus_{i\in\mathbb{Z}} P_{2i+1}}\ar@<0.7ex>[r]^{}&{\bigoplus_{i\in\mathbb{Z}} P_{2i}}\ar@<0.7ex>[l]^{}.}$$
\end{lemma}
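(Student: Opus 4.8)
The plan is to construct the functor $\mathcal{F}$ explicitly and then verify fullness and faithfulness by reducing to morphism computations in $K^b(\mathscr{P})$. First I would fix, for each object of $\mathcal{R}_2(\A)$ represented by a bounded complex of projectives $P_\bullet = (P_i)_{i\in\mathbb{Z}}$, the $2$-cyclic complex $\mathcal{F}(P_\bullet)$ with underlying objects $\bigoplus_i P_{2i+1}$ and $\bigoplus_i P_{2i}$, where the two differentials are assembled from the $d_i^P$ in the obvious block form; one checks directly that the composites vanish because consecutive differentials in $P_\bullet$ compose to zero, so this lands in $\mathcal{C}_2(\mathscr{P})$, and then passes to $\mathcal{K}_2(\mathscr{P})$. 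On morphisms, a chain map $P_\bullet \to Q_\bullet$ folds into a morphism of $2$-cyclic complexes, and a chain homotopy folds into a $2$-cyclic homotopy, so $\mathcal{F}$ is well defined on the homotopy category $K^b(\mathscr{P})$; since a $2$-cyclic complex is manifestly fixed by the shift-squared identification up to the sign involution $*$ already being built into the folding, $\mathcal{F}$ descends to the orbit category $\mathcal{R}_2(\A) = K^b(\mathscr{P})/[2]$.

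The essential step is the computation of Hom-spaces. Using the isomorphism
$$\Hom_{\mathcal{C}_2(\mathcal{A})}(\pi(M_\bullet),\pi(N_\bullet))\cong\bigoplus_{i\in\mathbb{Z}}\Hom_{\mathcal{C}^b(\mathcal{A})}(M_\bullet,N_\bullet[2i])$$
recalled in the excerpt (applied with $M_\bullet = P_\bullet$, $N_\bullet = Q_\bullet$, which are complexes of projectives), I would show the analogous identity holds after passing to homotopy classes: the homotopies on the $2$-cyclic side also decompose as a direct sum of homotopies $M_\bullet \to N_\bullet[2i+1]$ on the $\mathcal{C}^b$ side, because the folding of homotopies is again a bijection degree by degree. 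This yields
$$\Hom_{\mathcal{K}_2(\mathscr{P})}(\mathcal{F}(P_\bullet),\mathcal{F}(Q_\bullet))\cong\bigoplus_{i\in\mathbb{Z}}\Hom_{K^b(\mathscr{P})}(P_\bullet,Q_\bullet[2i]).$$
On the other hand, by the very definition of the orbit category,
$$\Hom_{\mathcal{R}_2(\A)}(P_\bullet,Q_\bullet)=\bigoplus_{i\in\mathbb{Z}}\Hom_{K^b(\mathscr{P})}(P_\bullet,Q_\bullet[2i]),$$
using that $K^b(\mathscr{P})\simeq\mathcal{D}^b(\A)$ since $\A$ has finite global dimension. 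Comparing the two displays, $\mathcal{F}$ induces an isomorphism on all Hom-spaces, i.e. it is fully faithful.

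The main obstacle I anticipate is bookkeeping rather than conceptual: one must check that the folding bijection at the level of morphisms is genuinely compatible with the two gradings — the $\mathbb{Z}$-grading coming from the summands $P_{2i+1}, P_{2i}$ and the $\mathbb{Z}_2$-grading of $\mathcal{C}_2$ — and in particular that the shift-squared twist on the derived side matches the $2$-cyclic structure with the correct signs (this is where the sign-changing involution $*$ on $\mathcal{C}_2(\A)$ and the convention $[2]$-orbit interact). Once the signs and index shifts are pinned down, one should also confirm that a morphism in $\mathcal{K}_2(\mathscr{P})$ between objects in the image of $\mathcal{F}$ lifts to a genuine chain map of bounded complexes (not merely of $2$-cyclic complexes) — this uses boundedness of $P_\bullet$ and $Q_\bullet$ to see that the infinite direct sums are actually finite in each degree, so the folded morphism unfolds. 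Modulo these checks, fullness and faithfulness follow from the Hom-space comparison above; I would cite \cite{PX97} for the original argument in the hereditary case and indicate that the same proof works verbatim once $\mathcal{D}^b(\A)\simeq K^b(\mathscr{P})$ is invoked.
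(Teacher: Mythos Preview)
The paper does not supply a proof of this lemma; it is quoted from \cite{PX97} and \cite[Lemma~3.1]{Br} as background. Your outline is correct and is exactly the argument given in those references: define $\mathcal{F}$ as the folding $\pi$ on bounded complexes of projectives, use the identity $\Hom_{\mathcal{C}_2(\A)}(\pi(M_\bullet),\pi(N_\bullet))\cong\bigoplus_{i}\Hom_{\mathcal{C}^b(\A)}(M_\bullet,N_\bullet[2i])$ and its homotopy-level analogue to identify $\Hom_{\mathcal{K}_2(\mathscr{P})}(\mathcal{F}(P_\bullet),\mathcal{F}(Q_\bullet))$ with the orbit-category Hom $\bigoplus_i\Hom_{K^b(\mathscr{P})}(P_\bullet,Q_\bullet[2i])$, and conclude full faithfulness; the boundedness of $P_\bullet,Q_\bullet$ is what makes the direct sums finite and the folding/unfolding bijective, as you note.
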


The following lemma converts the calculation of dimensions of $\Ext$-spaces into that of $\Hom$-spaces.
\begin{lemma}{\rm(\cite[Lemma 3.3]{Br})}\label{Ext to Hom}
If $M_\bullet,N_\bullet \in \mathcal{C}_2(\mathscr{P})$, then there exists an isomorphism of vector spaces $$\Ext_{\mathcal{C}_2(\A)}^1(N_\bullet,M_\bullet) \cong \Hom_{{\mathcal {K}}_2(\A)}(N_\bullet,M_\bullet^\ast).$$
\end{lemma}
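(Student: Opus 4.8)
The statement is the $2$-cyclic avatar of the classical fact that extensions in a category of complexes over an abelian category are computed by homotopy classes of maps into a shift, and I would reproduce that argument directly. The plan is: (1) realize $\Ext^1_{\mathcal{C}_2(\A)}(N_\bullet,M_\bullet)$ as the set of Yoneda equivalence classes of short exact sequences $0\to M_\bullet\xrightarrow{\iota_\bullet} E_\bullet\xrightarrow{\pi_\bullet} N_\bullet\to 0$ in $\mathcal{C}_2(\A)$, with the Baer sum; (2) use that $N_0,N_1$ are projective to see that each such sequence is split in every degree, and fix sections $\sigma_i\colon N_i\to E_i$ with $\pi_i\sigma_i=\mathrm{id}$, so that $E_i\cong M_i\oplus N_i$; (3) in the resulting block form the differential becomes $d_i^E=\left(\begin{smallmatrix} d_i^M & f_i\\ 0 & d_i^N\end{smallmatrix}\right)$ for uniquely determined $f_i\colon N_i\to M_{i+1}$, $i\in\mathbb{Z}_2$.

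The key computation is that the relations $d_{i+1}^E d_i^E=0$ for $i\in\mathbb{Z}_2$ are equivalent to $d_{i+1}^M f_i+f_{i+1}d_i^N=0$, which — once one inserts the sign change built into the involution $(-)^\ast$ — says precisely that $f_\bullet=(f_0,f_1)$ is a morphism $N_\bullet\to M_\bullet^\ast$ in $\mathcal{C}_2(\A)$. I would then check that replacing the sections $\sigma_\bullet$ by another choice — two choices differ by a family $h_i\colon N_i\to M_i$ — changes $f_\bullet$ by the map $i\mapsto d_{i+1}^M h_i - h_{i+1}d_i^N$, that is, by a null-homotopic morphism $N_\bullet\to M_\bullet^\ast$, and that Yoneda-equivalent extensions produce homotopic $f_\bullet$. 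This yields a well-defined map $\Phi\colon\Ext_{\mathcal{C}_2(\A)}^1(N_\bullet,M_\bullet)\to\Hom_{{\mathcal {K}}_2(\A)}(N_\bullet,M_\bullet^\ast)$.

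For the inverse, given $f_\bullet\colon N_\bullet\to M_\bullet^\ast$ I would set $E_i:=M_i\oplus N_i$ with differential $d_i^E:=\left(\begin{smallmatrix} d_i^M & f_i\\ 0 & d_i^N\end{smallmatrix}\right)$; the chain-map condition on $f_\bullet$ is exactly what forces $d^E$ to square to zero, so $E_\bullet\in\mathcal{C}_2(\A)$, and the evident inclusion and projection make it an extension of $N_\bullet$ by $M_\bullet$. Homotopic choices of $f_\bullet$ give Yoneda-equivalent extensions (the homotopy supplies the unipotent isomorphism between the middle terms), so this descends to a map $\Psi$ inverse to $\Phi$. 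Finally I would note that $\Phi$ is additive — the Baer sum of two extensions corresponds to the sum of the associated cocycles $f_\bullet$ — so that $\Phi$ is an isomorphism of vector spaces. The only delicate point is the bookkeeping of signs: arranging the sign in $(-)^\ast$, the sign in the block differential, and the sign in the homotopy formula so that ``change of section'' and ``null-homotopy'' become literally the same relation; after the sign conventions are pinned down, every step is a routine verification.
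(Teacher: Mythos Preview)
Your argument is correct and is exactly the standard cocycle description of $\Ext^1$ for complexes; it is also the argument Bridgeland gives in \cite[Lemma 3.3]{Br}. Note that the present paper does not supply its own proof of this lemma --- it simply quotes the result from \cite{Br} --- so there is nothing further to compare against. One small index slip worth fixing: the change in $f_i$ under a new choice of section $h_i\colon N_i\to M_i$ is $d_i^M h_i - h_{i+1} d_i^N$, not $d_{i+1}^M h_i - h_{i+1} d_i^N$ (the latter does not type-check, since $d_{i+1}^M$ has domain $M_{i+1}$); this is precisely the sign/index bookkeeping you flag at the end, and once corrected the identification with null-homotopy of maps $N_\bullet\to M_\bullet^\ast$ goes through verbatim.
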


A complex $M_\bullet \in \mathcal{C}_2(\A)$ is called \emph{acyclic} if $H_\ast(M_\bullet)=0$. Each object $P \in \mathscr{P}$ determines acyclic complexes
\[{K_P} = (\xymatrix{{P}\ar@<0.7ex>[r]^{1}&{P}\ar@<0.7ex>[l]^{0}}),  ~~~~~~~K_P^* = (\xymatrix{{P}\ar@<0.7ex>[r]^{0}&{P}\ar@<0.7ex>[l]^{1}}).\]

\begin{lemma}{\rm(\cite[Lemma 3.2]{Br})}\label{zero}
For each acyclic complex $M_\bullet \in \mathcal{C}_2(\mathscr{P})$, there are objects $P,Q \in \mathscr{P}$, unique up to isomorphism, such that $M_\bullet \cong K_P \bigoplus K_Q^*$.
\end{lemma}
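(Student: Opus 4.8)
The plan is to extract the two ``cycle'' subobjects of $M_\bullet$, use the finiteness of the global dimension of $\A$ to force them to be projective, and then split the resulting short exact sequences.

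First I would denote the differentials of $M_\bullet$ by $d_1^M:M_1\to M_0$ and $d_0^M:M_0\to M_1$. Acyclicity of $M_\bullet$ means $\im d_1^M=\Ker d_0^M$ and $\im d_0^M=\Ker d_1^M$; set $P:=\im d_1^M\subseteq M_0$ and $Q:=\im d_0^M\subseteq M_1$, so that there are short exact sequences
\[0\longrightarrow P\longrightarrow M_0\longrightarrow Q\longrightarrow 0,\qquad 0\longrightarrow Q\longrightarrow M_1\longrightarrow P\longrightarrow 0\]
in $\A$, the surjections being induced by $d_0^M$ and $d_1^M$ respectively. Splicing these two sequences repeatedly yields a projective resolution $\cdots\longrightarrow M_0\longrightarrow M_1\longrightarrow M_0\longrightarrow Q\longrightarrow 0$ of $Q$ whose terms and differentials are $2$-periodic, so that $\Ext_\A^{i}(Q,-)\cong\Ext_\A^{i+2}(Q,-)$ for every $i\ge 1$. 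Since $\A$ has finite global dimension $n$, we have $\Ext_\A^{j}(Q,-)=0$ for all $j>n$; picking an odd $j>n$ and invoking this periodicity gives $\Ext_\A^{1}(Q,-)=0$, that is, $Q\in\scrP$. The symmetric argument yields $P\in\scrP$. I expect this to be the only genuinely delicate point, since a subobject of a projective need not be projective; it is precisely this self-reproducing resolution, together with $\A$ having finite global dimension, that rescues the situation.

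Once $P,Q\in\scrP$, both short exact sequences above split, so $M_0\cong P\oplus Q$ and $M_1\cong Q\oplus P$. Fixing such decompositions with the first summand of $M_1$ chosen to be $\Ker d_1^M$ and the first summand of $M_0$ chosen to be $\im d_1^M=\Ker d_0^M$, the differentials acquire the matrix forms $d_1^M=\begin{pmatrix}0&\alpha\\0&0\end{pmatrix}$ and $d_0^M=\begin{pmatrix}0&\beta\\0&0\end{pmatrix}$ with $\alpha,\beta$ isomorphisms, the other entries vanishing because the kernels and images coincide with the chosen summands. Absorbing $\alpha$ and $\beta$ into the identifications turns $M_\bullet$ into the direct sum $K_P\oplus K_Q^{\ast}$, which establishes existence. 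For uniqueness, note that from any isomorphism $M_\bullet\cong K_P\oplus K_Q^{\ast}$ one reads off $P\cong\im d_1^M$ and $Q\cong\im d_0^M$, and these subobjects depend only on $M_\bullet$; hence $P$ and $Q$ are determined up to isomorphism. Apart from the projectivity step, everything reduces to the splitting lemma plus bookkeeping with $2\times2$ matrices of morphisms, so I anticipate no further obstacle.
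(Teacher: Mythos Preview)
The paper does not give its own proof of this lemma; it simply quotes it from Bridgeland \cite[Lemma~3.2]{Br}. Your argument is correct and is essentially the standard one: identify $P=\im d_1^M$ and $Q=\im d_0^M$, use the $2$-periodic projective resolution together with finite global dimension to force $P,Q\in\scrP$, then split the two short exact sequences and normalise the resulting $2\times 2$ matrices. The uniqueness observation via $P\cong\im d_1^M$, $Q\cong\im d_0^M$ is also the natural one. Nothing further is needed.
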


\subsection{Ringel--Hall algebras}
 Given objects $L,M,N \in \mathcal{A}$, let $\Ext_\mathcal{A}^1(M,N)_L \subset \Ext_\mathcal{A}^1(M,N)$ be the subset consisting of those equivalence classes of short exact sequences with middle term $L$.
 From now on, we always assume that $\A$ is finitary, i.e., all $\Hom$-spaces and $\Ext$-spaces are finite dimensional.
\begin{definition}\label{Hall algebra of abelian category}
The \emph{Hall algebra} $\mathcal {H}(\mathcal{A})$ of $\mathcal{A}$ is the vector space over $\mathbb{C}$ with basis elements $[M] \in \Iso(\mathcal{A}$), and with multiplication defined by
\[[M] \diamond [N] = \sum\limits_{[L] \in \Iso(\mathcal{A})} {\frac{{|\Ext_\mathcal{A}^1{{(M,N)}_L}|}}{{|\Hom_\mathcal{A}(M,N)|}}} [L].\]
\end{definition}

For objects $M,N \in \mathcal{A}$, the {\it Euler form} $\lr{-,-}:K(\A)\times K(\A)\ra
\bbz$ associated with $\mathcal{A}$ is defined by
$$\lr{\hat{M},\hat{N}}:=\sum_{i\in\mathbb{Z}_{\geq0}}(-1)^i\dim_k\Ext^i_{\A}(M,N).$$
The \emph{symmetric Euler form} $(\cdot ,\cdot ): K(\mathcal{A})\times K(\mathcal{A})\longrightarrow \mathbb{Z},$ is given by $$(\alpha,\beta)=\lr{\alpha,\beta}+\lr{\beta,\alpha}$$
for all $\alpha,\beta \in K(\mathcal{A})$.
The \emph{Ringel--Hall algebra} ${\mathcal {H}}_{\rm{tw}}(\mathcal{A})$ of $\mathcal{A}$ is the same vector space as $\mathcal {H}(\mathcal{A})$, but with multiplication defined by $$[M]\ast[N]=v^{\lr{\hat{M},\hat{N}}}\cdot[M]\diamond[N].$$

\subsection{Bridgeland's Hall algebras}
Let $\mathcal {H}(\mathcal{C}_2(\mathcal{A}))$ be the Hall algebra of the abelian category $\mathcal{C}_2(\mathcal{A})$ defined in Definition \ref{Hall algebra of abelian category} and $\mathcal {H}(\mathcal{C}_2(\mathscr{P})) \subset \mathcal {H}(\mathcal{C}_2(\mathcal{A}))$ be the subspace spanned by the isoclasses of complexes of projectives. Define $\mathcal {H}_{\rm{tw}}(\mathcal{C}_2(\mathscr{P}))$ to be the same vector space as $\mathcal {H}(\mathcal{C}_2(\mathscr{P}))$ with ``twisted" multiplication defined by $$[M_\bullet]\ast[N_\bullet]:=v^{\lr{{\hat{M}}_0,{\hat{N}}_0}+\lr{{\hat{M}}_1,{\hat{N}}_1}}\cdot[M_\bullet]\diamond[N_\bullet].$$ Then $\mathcal {H}_{\rm{tw}}(\mathcal{C}_2(\mathscr{P}))$ is an associative algebra (cf. \cite{Br}).

We have the following simple relations for the acyclic complexes ${K_P}$ and ${K_P^*}$.

\begin{lemma}{\rm(\cite[Lemma 3.5]{Br})}\label{formula}
For any object $P \in \mathscr{P}$ and any complex ${M_\bullet}\in\mathcal{C}_2(\mathscr{P})$, we have the following relations in $\mathcal {H}_{\rm{tw}}(\mathcal{C}_2(\mathscr{P}))$
\begin{alignat}{2}
&[K_P]\ast[M_\bullet]=v^{\lr{\hat{P},\hat{M}_\bullet}}[K_P \oplus M_\bullet],&\quad&[M_\bullet]\ast[K_P]=v^{-\lr{\hat{M}_\bullet,\hat{P}}}[K_P \oplus M_\bullet];\\
&[K_P^\ast]\ast[M_\bullet]=v^{-\lr{\hat{P},\hat{M}_\bullet}}[K_P^\ast \oplus M_\bullet],&\quad&[M_\bullet]\ast[K_P^\ast]=v^{\lr{\hat{M}_\bullet,\hat{P}}}[K_P^\ast \oplus M_\bullet];\\
&[K_P]\ast[M_\bullet]=v^{(\hat{P},\hat{M}_\bullet)}[M_\bullet]\ast[K_P],&\quad&[K_P^\ast]\ast[M_\bullet]=v^{-(\hat{P},\hat{M}_\bullet)}[M_\bullet]\ast[K_P^\ast]\label{community of K_P with others}.
\end{alignat}
In particular, for $P,Q \in \mathscr{P}$, we have
\begin{flalign}&[K_P]\ast[K_Q]=[K_P \oplus K_Q],~~~~[K_P]\ast[K_Q^\ast]=[K_P \oplus K_Q^\ast];\\
&[[K_P],[K_Q]]=[[K_P],[K_Q^\ast]]=[[K_P^\ast],[K_Q^\ast]]=0.\end{flalign}
\end{lemma}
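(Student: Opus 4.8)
The plan is to obtain every relation from two facts. First, $K_P$ and $K_P^\ast$ (and hence their star-duals) are null-homotopic: the identity endomorphism of $K_P$ is homotopic to $0$ via $1_P$ placed in the appropriate degree, so $K_P\cong0$ in $\mathcal{K}_2(\mathcal{A})$, and similarly for $K_P^\ast$. By Lemma \ref{Ext to Hom} this forces
$$\Ext^1_{\mathcal{C}_2(\mathcal{A})}(K_P,M_\bullet)\cong\Hom_{\mathcal{K}_2(\mathcal{A})}(K_P,M_\bullet^\ast)=0,$$
and in the same way $\Ext^1_{\mathcal{C}_2(\mathcal{A})}(M_\bullet,K_P)$, $\Ext^1_{\mathcal{C}_2(\mathcal{A})}(K_P^\ast,M_\bullet)$ and $\Ext^1_{\mathcal{C}_2(\mathcal{A})}(M_\bullet,K_P^\ast)$ all vanish. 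Thus in each of the four relevant Hall products the split sequence is the only contributing extension, so for instance $[K_P]\diamond[M_\bullet]=|\Hom_{\mathcal{C}_2(\mathcal{A})}(K_P,M_\bullet)|^{-1}\,[K_P\oplus M_\bullet]$, and likewise for $[M_\bullet]\diamond[K_P]$, $[K_P^\ast]\diamond[M_\bullet]$ and $[M_\bullet]\diamond[K_P^\ast]$.

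Second, I would compute the four Hom-spaces directly from the definition of a morphism of $2$-cyclic complexes: writing out the two commuting-square conditions against the differentials of $K_P$ and $K_P^\ast$ and using $d^M_{i+1}d^M_i=0$ to see that one of them is automatic, one obtains
$$\Hom_{\mathcal{C}_2(\mathcal{A})}(K_P,M_\bullet)\cong\Hom_{\mathcal{A}}(P,M_1),\qquad\Hom_{\mathcal{C}_2(\mathcal{A})}(M_\bullet,K_P)\cong\Hom_{\mathcal{A}}(M_0,P),$$
$$\Hom_{\mathcal{C}_2(\mathcal{A})}(K_P^\ast,M_\bullet)\cong\Hom_{\mathcal{A}}(P,M_0),\qquad\Hom_{\mathcal{C}_2(\mathcal{A})}(M_\bullet,K_P^\ast)\cong\Hom_{\mathcal{A}}(M_1,P).$$
Substituting these into the twist $[X_\bullet]\ast[Y_\bullet]=v^{\langle\hat{X}_0,\hat{Y}_0\rangle+\langle\hat{X}_1,\hat{Y}_1\rangle}[X_\bullet]\diamond[Y_\bullet]$, the $v$-exponent of $[K_P]\ast[M_\bullet]$ becomes $\langle\hat{P},\hat{M}_0\rangle+\langle\hat{P},\hat{M}_1\rangle-2\dim_k\Hom_{\mathcal{A}}(P,M_1)$, which equals $\langle\hat{P},\hat{M}_0\rangle-\langle\hat{P},\hat{M}_1\rangle=\langle\hat{P},\hat{M}_\bullet\rangle$ because $P$ is projective. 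The other three identities follow identically; for $[M_\bullet]\ast[K_P]$ and $[M_\bullet]\ast[K_P^\ast]$ the key point is that $M_0$ and $M_1$ are projective, so $\langle\hat{M}_i,\hat{P}\rangle=\dim_k\Hom_{\mathcal{A}}(M_i,P)$, which makes the exponent collapse to $-\langle\hat{M}_\bullet,\hat{P}\rangle$, respectively $\langle\hat{M}_\bullet,\hat{P}\rangle$.

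Once the four main identities are in place, the commutation relations \eqref{community of K_P with others} are immediate: from $[K_P]\ast[M_\bullet]=v^{\langle\hat{P},\hat{M}_\bullet\rangle}[K_P\oplus M_\bullet]$ and $[M_\bullet]\ast[K_P]=v^{-\langle\hat{M}_\bullet,\hat{P}\rangle}[K_P\oplus M_\bullet]$ one gets $[K_P]\ast[M_\bullet]=v^{\langle\hat{P},\hat{M}_\bullet\rangle+\langle\hat{M}_\bullet,\hat{P}\rangle}[M_\bullet]\ast[K_P]=v^{(\hat{P},\hat{M}_\bullet)}[M_\bullet]\ast[K_P]$, and the $K_P^\ast$ case is the same with the sign of the exponent reversed. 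Finally, taking $M_\bullet$ to be $K_Q$ or $K_Q^\ast$ and using $\hat{K_Q}=\hat{K_Q^\ast}=0$ in $K(\mathcal{A})$ makes all the exponents above vanish, which yields the displayed special cases. I do not expect a conceptual obstacle here; the only genuine care needed is bookkeeping — tracking the degree indices and the sign changes of the star-involution, and noticing that the apparent mismatch in the twist exponent is resolved precisely by the projectivity of $M_0$ and $M_1$, not merely of $P$.
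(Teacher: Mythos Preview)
Your argument is correct. The paper itself does not prove this lemma; it is quoted verbatim from Bridgeland's paper \cite[Lemma~3.5]{Br} with no proof supplied, so there is nothing to compare against beyond noting that your outline is exactly the standard proof given there: null-homotopy of $K_P$ and $K_P^\ast$ plus Lemma~\ref{Ext to Hom} kills the $\Ext^1$, the four $\Hom$-spaces are identified by the elementary commuting-square analysis you describe, and the twist exponent collapses via $\langle\hat P,\hat M_i\rangle=\dim_k\Hom_{\mathcal A}(P,M_i)$ (respectively $\langle\hat M_i,\hat P\rangle=\dim_k\Hom_{\mathcal A}(M_i,P)$) because all objects involved are projective. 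Your remark that the reductions for $[M_\bullet]\ast[K_P]$ and $[M_\bullet]\ast[K_P^\ast]$ genuinely use the projectivity of $M_0,M_1$ --- not just of $P$ --- is the one point worth flagging, and you have it right.
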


By Lemma \ref{zero} and Lemma \ref{formula}, the acyclic elements of $\mathcal {H}_{\rm{tw}}(\mathcal{C}_2(\mathscr{P}))$ satisfy the Ore conditions and thus we  have the following definition; See \cite{Br}.
\begin{definition}
The \emph{Bridgeland's Hall algebra} of $\mathcal{A}$, denoted by $\mathcal {D}\mathcal {H}_2(\mathcal{A})$, is the localization of $\mathcal {H}_{\rm{tw}}(\mathcal{C}_2(\mathscr{P}))$ with respect to the elements $[M_\bullet]$ corresponding to acyclic complexes $M_\bullet$. In symbols, $$\mathcal {D}\mathcal {H}_2(\mathcal{A}):=\mathcal {H}_{\rm{tw}}(\mathcal{C}_2(\mathscr{P}))[~[M_\bullet]^{-1}~|~H_\ast(M_\bullet)=0~].$$
\end{definition}
As explained in \cite{Br}, this is the same as localizing by the elements $[K_P]$ and $[K_P^\ast]$ for all objects $P \in \mathscr{P}.$
Writing $\alpha \in K(\mathcal{A})$ in the form $\alpha = \hat{P}-\hat{Q}$ for some objects $P,Q \in \mathscr{P}$, one defines $K_\alpha = [K_P]\ast[K_Q]^{-1}, K_\alpha^\ast = [K_P^\ast]\ast[K_Q^\ast]^{-1}$. Note that the relations in (\ref{community of K_P with others}) are still satisfied with the elements $[K_P]$ and $[K_P^\ast]$ replaced by $K_\alpha$ and $K_\alpha^\ast$, respectively, for any $\alpha \in K(\mathcal{A})$.

\section{Main result}
\subsection{Projective resolutions}
For each object $M\in\A$, take a projective resolution of $M$
\begin{equation}\label{tsfj}\xymatrix{P'_M:0\ar[r]&P'_n\ar[r]^-{p'_n}&P'_{n-1}\ar[r]^-{p'_{n-1}}&\cdots\ar[r]^-{p'_{2}}&
P'_{1}\ar[r]^-{p'_{1}}&P'_{0}\ar[r]^-{p'_{0}}&M\ar[r]&0.}\end{equation}
Then we obtain the complex
$$\xymatrix{{P'_M}_\bullet:\cdots\ar[r]&0\ar[r]&P'_n\ar[r]^-{p'_n}&P'_{n-1}\ar[r]^-{p'_{n-1}}&\cdots\ar[r]^-{p'_{2}}&
P'_{1}\ar[r]^-{p'_{1}}&P'_{0}\ar[r]^-{p'_{0}}&0\ar[r]&\cdots,}$$
where $P'_{i},0\leq i\leq n$, are of degree $-i$.
It is similar to \cite[Section 4.1]{Br} that we can define the minimal projective resolutions. It is easy to see that each object has a unique minimal projective resolution up to isomorphism.

Let
\begin{equation}\label{jx}\xymatrix{{P}_M:0\ar[r]&P_n\ar[r]^-{p_n}&P_{n-1}\ar[r]^-{p_{n-1}}&\cdots\ar[r]^-{p_{2}}&
P_{1}\ar[r]^-{p_{1}}&P_{0}\ar[r]^-{p_{0}}&M\ar[r]&0}\end{equation} be the minimal projective resolution of $M$.
Consider the corresponding $2$-cyclic complex $$C_M:=\pi({P_M}_\bullet)\in\mathcal{C}_2(\mathscr{P}).$$
Set $P_M^{\rm even}:=\bigoplus\limits_{0\leq i=2m\leq n}P_i$ and $P_M^{\rm odd}:=\bigoplus\limits_{0\leq i=2m+1\leq n}P_i$. Then $C_M$ is the complex
$$\xymatrix{{P_M^{\rm{odd}}}\ar@<0.7ex>[r]&{P_M^{\rm{even}}}\ar@<0.7ex>[l]}$$ with the naturally defined differentials.

It is similar to \cite[Lemma 4.1]{Br} that we have the following
\begin{lemma}\label{form}
Any resolution (\ref{tsfj}) is isomorphic to a resolution of the form
\begin{multline}\label{xingshi}\xymatrix{0\ar[r]&P_n\oplus R_{n-1}\ar[rr]^-{{\begin{pmatrix}\begin{smallmatrix}p_n&0\\0&0\\0&1\end{smallmatrix}\end{pmatrix}}}&&P_{n-1}\oplus R_{n-2}\oplus R_{n-1}\ar[rr]^-{{\begin{pmatrix}\begin{smallmatrix}p_{n-1}&0&0\\0&0&0\\0&1&0\end{smallmatrix}\end{pmatrix}}}&&\cdots}\\
\xymatrix{\cdots
\ar[rr]^-{{\begin{pmatrix}\begin{smallmatrix}p_2&0&0\\0&0&0\\0&1&0\end{smallmatrix}\end{pmatrix}}}&&P_1\oplus R_0\oplus R_1\ar[rr]^-{\begin{pmatrix}\begin{smallmatrix}p_1&0&0\\0&1&0\end{smallmatrix}\end{pmatrix}}&&P_0\oplus R_0\ar[rr]^-{\left(\begin{smallmatrix}p_0&0&0\end{smallmatrix}\right)}&&M\ar[r]&0}\end{multline}
for some objects $R_i\in\mathscr{P},0\leq i<n$, and some minimal projective resolution
$$\xymatrix{0\ar[r]&P_n\ar[r]^-{p_n}&P_{n-1}\ar[r]^-{p_{n-1}}&\cdots\ar[r]^-{p_{2}}&
P_{1}\ar[r]^-{p_{1}}&P_{0}\ar[r]^-{p_{0}}&M\ar[r]&0.}$$
\end{lemma}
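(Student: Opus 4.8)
The plan is to mimic closely the proof of \cite[Lemma 4.1]{Br}, adapting it from the $2$-periodic hereditary/global-dimension-$2$ setting to a projective resolution of arbitrary finite length. The underlying principle is that any projective resolution of $M$ differs from the minimal one only by ``contractible'' summands, i.e.\ split complexes of the form $\xymatrix{R\ar[r]^-{1}&R}$ inserted in consecutive degrees, and the normal form (\ref{xingshi}) is exactly the bookkeeping of all such contractible pieces.

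First I would argue by induction on $n$, the length of the resolution, or equivalently induction downward on the degree, comparing (\ref{tsfj}) with the minimal resolution (\ref{jx}). Since $\A$ has enough projectives and $M$ has a minimal projective resolution unique up to isomorphism, a standard comparison-of-resolutions argument (the projective resolution is unique up to \emph{chain homotopy equivalence}, and a homotopy equivalence between complexes of projectives is an isomorphism onto a direct summand complemented by a contractible complex) shows that (\ref{tsfj}) is isomorphic to the direct sum of (\ref{jx}) with a finite direct sum of elementary contractible complexes $D_i(R): \xymatrix{\cdots\ar[r]&0\ar[r]&R_i\ar[r]^-{1}&R_i\ar[r]&0\ar[r]&\cdots}$, where the copy of $R_i$ sits in degrees $-i-1$ and $-i$. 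Collecting, in degree $-j$ the total term becomes $P_j\oplus R_{j-1}\oplus R_j$ (with $R_{-1}=R_n=0$), and the differential is the block matrix written in (\ref{xingshi}): the $p_j$ block comes from the minimal resolution, the identity block sends the ``upper'' copy $R_{j-1}$ in degree $-j$ identically to the ``lower'' copy $R_{j-1}$ in degree $-(j-1)$, and all remaining blocks vanish because the contractible pieces are pairwise orthogonal and orthogonal to $P_M$. Keeping careful track of which copy of $R_i$ lies in which degree produces exactly the $3\times 3$ (resp.\ boundary $2\times 3$ and $1\times 3$) block shapes displayed.

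The one genuine point to verify, and the step I expect to be the main obstacle, is that the change of basis can be chosen \emph{over the whole resolution simultaneously} so that the differentials acquire precisely the stated block form rather than merely an upper-triangular form with invertible diagonal blocks $p_j\oplus \mathrm{id}$; i.e.\ one must show the off-diagonal entries can be cleared. This is done exactly as in \cite{Br}: starting from the top degree $-n$ and working downward, at each stage one uses the splitting $R_{j-1}\hookrightarrow$ (degree $-j$ term) together with the fact that $R_{j-1}$ maps isomorphically onto a summand of the degree $-(j-1)$ term, and performs an automorphism of the degree $-(j-1)$ term (adding multiples of the image of $R_{j-1}$) to kill the components of $p_{j-1}$ and of the other identity map landing in that copy of $R_{j-1}$; one checks this automorphism is compatible with the already-normalised differential in degree $-j$ because $d^2=0$. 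An equivalent and perhaps cleaner packaging: the inductive hypothesis applied to the subcomplex in degrees $\le -1$ (a resolution of the image of $p_0$, hence of $\Omega M$) gives the normal form there, and then one analyses only the last step $P_0\oplus R_0 \to M$, using that $R_0$ is a projective summand mapping to $0$ in $M$.

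Finally I would remark that the objects $R_i$ are determined by $M$ together with the chosen resolution (as the cokernels/kernels measuring the non-minimality), and that the construction is functorial enough that the induced $2$-cyclic complex $\pi$ of (\ref{xingshi}) differs from $C_M=\pi({P_M}_\bullet)$ only by acyclic summands of the form $K_{R^{\mathrm{even}}}$ and $K^\ast_{R^{\mathrm{odd}}}$ where $R^{\mathrm{even}}=\bigoplus_{i\ \mathrm{even}}R_i$, $R^{\mathrm{odd}}=\bigoplus_{i\ \mathrm{odd}}R_i$ — this is the form in which the lemma will actually be used in the sequel, and it follows immediately from Lemma \ref{zero} once the normal form (\ref{xingshi}) is established.
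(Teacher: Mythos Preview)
Your proposal is correct and matches the paper's approach: the paper does not give an independent proof of this lemma but simply states that it is obtained in the same way as \cite[Lemma~4.1]{Br}, which is precisely what you set out to do. Your inductive splitting-off of contractible summands and the check that off-diagonal blocks can be cleared degree by degree is exactly the adaptation of Bridgeland's argument to resolutions of arbitrary finite length, and your final remark about the induced $2$-cyclic complex is indeed how the lemma is used in the proof of Lemma~\ref{ldy}.
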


\subsection{Certain elements in $\mathcal {D}\mathcal {H}_2(\mathcal{A})$}
For each $M\in\A$
and any projective resolution $P'_M$ (\ref{tsfj}), set $M'_i:=\im p'_i, 0\leq i\leq n$. Let $M'_{\rm {even}}:=\bigoplus\limits_{1\leq i=2m\leq n}M'_i$ and $M'_{\rm{odd}}:=\bigoplus\limits_{1\leq i=2m+1\leq n}M'_i$. Then we define
$$E_{P'_M}:=v^{\lr{\hat{M}'_{\rm{odd}}-\hat{M}'_{\rm {even}},\hat{M}}}K_{-\hat{M}'_{\rm{odd}}}K_{-\hat{M}'_{\rm {even}}}^\ast [C'_M],$$
where $C'_M=\pi({P'_M}_\bullet)$. Similarly, for the minimal projective resolution $P_M$ (\ref{jx}), we can define $M_i,0\leq i\leq n$, and thus $M_{\rm {even}},M_{\rm {odd}}$.

\begin{lemma}\label{ldy}
For each $M\in\A$ and any two projective resolutions $P'_M$ and $P''_M$,
$E_{P'_M}=E_{P''_M}$.
\end{lemma}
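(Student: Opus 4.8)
The plan is to show that $E_{P'_M}$ depends only on $M$, not on the choice of projective resolution, by reducing the comparison of two arbitrary resolutions to the comparison of an arbitrary resolution with the \emph{minimal} one. By Lemma \ref{form}, any resolution $P'_M$ is isomorphic to one of the standard form \eqref{xingshi}, obtained from the minimal resolution $P_M$ by inserting contractible ``bubbles'' $R_i \xrightarrow{1} R_i$ between consecutive terms (placed in degrees $-i$ and $-(i+1)$). It therefore suffices to prove the identity $E_{P'_M}=E_{P_M}$ in that case; the general case follows since two resolutions $P'_M$ and $P''_M$ then both equal $E_{P_M}$.

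First I would compute the images $M'_i = \im p'_i$ for the resolution \eqref{xingshi}. Reading off the differentials, one sees that $M'_i \cong M_i \oplus R_{i-1}$ for $1\le i\le n$ (with the convention $R_{-1}=0$), where $M_i=\im p_i$ comes from the minimal resolution; hence in $K(\A)$ we get $\hat{M}'_i = \hat{M}_i + \hat{R}_{i-1}$. Summing over the appropriate parities, $\hat{M}'_{\mathrm{even}} = \hat{M}_{\mathrm{even}} + \sum_{i\ \mathrm{odd}}\hat{R}_i$ and $\hat{M}'_{\mathrm{odd}} = \hat{M}_{\mathrm{odd}} + \hat{R}_0 + \sum_{i\ \mathrm{even},\, i\ge 2}\hat{R}_i$, so that $\hat{M}'_{\mathrm{odd}} - \hat{M}'_{\mathrm{even}}$ differs from $\hat{M}_{\mathrm{odd}} - \hat{M}_{\mathrm{even}}$ by the alternating sum $\sum_{i\ge 0}(-1)^i \hat{R}_i$. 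Next I would analyze the $2$-cyclic complex $C'_M=\pi({P'_M}_\bullet)$: applying $\pi$ to the contractible summands, each bubble $R_i\xrightarrow{1}R_i$ contributes a direct summand $K_{R_i}$ or $K_{R_i}^\ast$ of $C'_M$ according to the parity of $i$ (one of the two terms sits in even degree, the other in odd). Thus $C'_M \cong C_M \oplus \bigoplus_{i\ \mathrm{even}} K_{R_i}^{(\ast)} \oplus \bigoplus_{i\ \mathrm{odd}} K_{R_i}^{(\ast)}$ with the appropriate stars, and in $\H_{\mathrm{tw}}(\C_2(\scrP))$ the class $[C'_M]$ can be rewritten, using Lemma \ref{formula}, as a product of the $[K_{R_i}]$ or $[K_{R_i}^\ast]$ with $[C_M]$, up to an explicit power of $v$ coming from the Euler-form factors in the formulas of Lemma \ref{formula}.

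Then I would substitute these computations into the definition of $E_{P'_M}$. The factors $K_{-\hat{M}'_{\mathrm{odd}}}$ and $K^\ast_{-\hat{M}'_{\mathrm{even}}}$ split, via the additivity $K_{\alpha+\beta}=K_\alpha K_\beta$ (and similarly for $K^\ast$), into the corresponding factors for the minimal resolution times factors $K_{-\hat{R}_i}^{(\ast)}$ indexed by the bubbles. Moving these $K$'s past $[C_M]$ and combining with the $[K_{R_i}]$, $[K^\ast_{R_i}]$ produced from $[C'_M]$, the acyclic contributions should cancel: $K_{-\hat{R}_i}\ast[K_{R_i}] = [K_{R_i}]\ast[K_{R_i}]^{-1}\ast[K_{R_i}] $, etc., collapse to the identity after accounting for the commutation relations. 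What remains is $E_{P_M}$ times a single scalar $v^{c}$, where $c$ collects (i) the Euler-form exponent in the prefactor of $E_{P'_M}$ versus $E_{P_M}$, which changed by $\lr{\sum(-1)^i\hat{R}_i,\hat{M}}$; (ii) the exponents generated when commuting the various $K_\alpha$, $K_\alpha^\ast$ past $[C_M]$ and past each other (relation \eqref{community of K_P with others} and Lemma \ref{formula}); and (iii) the exponents from rewriting $[C'_M]$ in terms of $[C_M]$. The main obstacle is precisely the bookkeeping in (i)--(iii): one must verify that all these powers of $v$ sum to zero. I expect this to follow from repeated use of $\lr{\hat{P},\hat{M}_\bullet}$ being computable from a projective resolution together with the identity $\hat{C}_M = \hat{M}$ (so that $\lr{-,\hat{C}_M}=\lr{-,\hat{M}}$), which is exactly what makes the prefactor in the definition of $E_{P'_M}$ the ``right'' normalization; carrying this out carefully is the technical heart of the lemma, but it is a finite, deterministic computation once the structural decomposition above is in place.
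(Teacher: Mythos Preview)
Your proposal is correct and follows essentially the same route as the paper: reduce via Lemma~\ref{form} to comparing a resolution in the form \eqref{xingshi} with the minimal one, identify $M'_i\cong M_i\oplus R_{i-1}$ and $C'_M\cong C_M\oplus K_{R_{\rm even}}\oplus K^\ast_{R_{\rm odd}}$, then substitute and cancel. The bookkeeping you flag in (i)--(iii) is in fact immediate once you group the $R_i$ by parity as the paper does: since all $K_\alpha$, $K^\ast_\alpha$ commute with one another and $\hat{C}_M=\hat{M}$, Lemma~\ref{formula} gives $[C'_M]=v^{\lr{\hat{R}_{\rm odd}-\hat{R}_{\rm even},\hat{M}}}K_{\hat{R}_{\rm even}}K^\ast_{\hat{R}_{\rm odd}}[C_M]$, and both the $K$-factors and the $v$-exponent cancel on the nose against those produced by $\hat{M}'_{\rm odd}=\hat{M}_{\rm odd}+\hat{R}_{\rm even}$ and $\hat{M}'_{\rm even}=\hat{M}_{\rm even}+\hat{R}_{\rm odd}$.
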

\begin{proof}
By Lemma \ref{form}, each projective resolution of $M$ is of the form (\ref{xingshi}) up to isomorphism.
So it suffices to prove that for the projective resolution $P'_M$ (\ref{xingshi}) and the corresponding minimal projective resolution $P_M$ (\ref{jx}), $E_{P_M}=E_{P'_M}$.

It is easy to see that for each $1\leq i\leq n$,
$$M'_i\cong M_i\oplus R_{i-1}~~\text{and}~~C'_M\cong K_{R_{\rm even}}\oplus K_{R_{\rm odd}}^\ast\oplus C_M,$$ where
$R_{\rm {even}}:=\bigoplus\limits_{0\leq i=2m< n}R_i$ and $R_{\rm{odd}}:=\bigoplus\limits_{1\leq i=2m+1< n}R_i$. Thus, $M'_{\rm {even}}\cong M_{\rm {even}}\oplus R_{\rm{odd}}$ and $M'_{\rm {odd}}\cong M_{\rm {odd}}\oplus R_{\rm{even}}$.
By Lemma \ref{formula},
$$[C'_M]=[K_{R_{\rm even}}\oplus K_{R_{\rm odd}}^\ast\oplus C_M]=
v^{\lr{\hat{R}_{\rm odd}-\hat{R}_{\rm even},\hat{M}}}K_{\hat{R}_{\rm even}}K_{\hat{R}_{\rm odd}}^\ast[C_M].$$
Hence, \begin{equation*}
\begin{split}
E_{P'_M}&=v^{\lr{\hat{M}'_{\rm{odd}}-\hat{M}'_{\rm {even}},\hat{M}}}K_{-\hat{M}'_{\rm{odd}}}K_{-\hat{M}'_{\rm {even}}}^\ast [C'_M]\\
&=v^{\lr{\hat{M}'_{\rm{odd}}-\hat{M}'_{\rm {even}},\hat{M}}+\lr{\hat{R}_{\rm odd}-\hat{R}_{\rm even},\hat{M}}}K_{-\hat{M}'_{\rm{odd}}}K_{-\hat{M}'_{\rm {even}}}^\ast K_{\hat{R}_{\rm even}}K_{\hat{R}_{\rm odd}}^\ast[C_M]\\
&=v^{\lr{\hat{M}_{\rm{odd}}-\hat{M}_{\rm {even}},\hat{M}}}K_{-\hat{M}_{\rm{odd}}}K_{-\hat{M}_{\rm {even}}}^\ast [C_M]\\
&=E_{P_M}.
\end{split}
\end{equation*}
\end{proof}
By Lemma \ref{ldy}, for any  projective resolution $P_M$ we can define
$$E_{M}:=v^{\lr{\tau(M),\hat{M}}}K_{-\hat{M}_{\rm{odd}}}K_{-\hat{M}_{\rm {even}}}^\ast [C_M],$$
where $\tau(M):=\hat{M}_{\rm{odd}}-\hat{M}_{\rm {even}}$.

\subsection{Multiplicative properties of elements $E_M$}
\begin{lemma}\label{yin1}
$|\Hom_{\C^b(\A)}({P_M}_\bullet,{P_N}_\bullet)|=\prod\limits_{i=0}^{n-1}|\Hom(P_i,N_{i+1})|\cdot|\Hom_{\A}(M,N)|.$
\end{lemma}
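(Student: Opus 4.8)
The plan is to compute $\Hom_{\C^b(\A)}({P_M}_\bullet,{P_N}_\bullet)$ directly as the space of chain maps between the two bounded complexes, exploiting that both complexes are built from projectives concentrated in degrees $0,-1,\dots,-n$. A chain map $f_\bullet\colon {P_M}_\bullet\to{P_N}_\bullet$ is a tuple $(f_i\colon P_i\to N_i)_{0\le i\le n}$ (writing $N_i$ for the $i$-th term of ${P_N}_\bullet$, so $N_0$ here plays the role of the corresponding projective in $N$'s resolution — I will be careful with the clash of notation with the module $N$) satisfying the commuting squares $p_{i}^{N}\circ f_i = f_{i-1}\circ p_i^{M}$ for $1\le i\le n$. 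So the first step is to set up the exact sequence
\[
0\longrightarrow \Hom_{\C^b(\A)}({P_M}_\bullet,{P_N}_\bullet)\longrightarrow \prod_{i=0}^{n}\Hom(P_i,N_i)\stackrel{\delta}{\longrightarrow}\prod_{i=1}^{n}\Hom(P_i,N_{i-1}),
\]
where $\delta$ records the failure of the squares to commute.

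Second, I would identify the cokernel-side structure: because ${P_N}_\bullet$ is a projective resolution of $N$, it is in particular an exact complex in positive-resolution degrees, and $\Hom(P_i,-)$ is exact since $P_i$ is projective. The key computation is then that a chain map ${P_M}_\bullet\to{P_N}_\bullet$ is the same datum as a chain map ${P_M}_\bullet\to N$ (the complex $N$ concentrated in degree $0$) together with a choice of lift through the resolution, and the standard comparison theorem for projective resolutions tells us that chain maps ${P_M}_\bullet\to N$ modulo nothing (since ${P_N}_\bullet$ is a genuine resolution, not just up to homotopy) are parametrized by $\Hom_\A(M,N)$ on the homology/degree-$0$ level, while the remaining freedom in choosing the lift at each stage $i\ge 1$ is precisely a coset of $\Hom(P_i,\ker p_{i-1}^N)$, and this kernel is the image of $p_i^N$, so by projectivity of $P_i$ the lifting freedom at stage $i$ surjects onto — in fact is — $\Hom(P_i, \im p_i^N)\cong \Hom(P_i, N_{i})/(\text{something})$. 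Rather than chase this, the cleanest route is: split ${P_N}_\bullet$ into short exact sequences $0\to N_{i}^{\ker}\to N_{i-1}\to N_{i-2}^{\ker}\to 0$ (with $N_{-1}^{\ker}=N$), apply $\Hom(P_i,-)$ which stays exact, and run an induction on $n$ peeling off the top term. This gives the factor $\prod_{i=0}^{n-1}\Hom(P_i,N_{i+1})$ as the accumulated lifting ambiguity and $\Hom_\A(M,N)$ as the base case, yielding the product formula on cardinalities since $k$ is finite.

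Concretely the induction step is: truncate ${P_N}_\bullet$ by removing its top term $N_n$, obtaining a projective resolution ${P_{N}}_\bullet^{\le n-1}$ of $N$ with the same terms except $N_n$ is replaced by $0$ and $N_{n-1}$ is replaced by $N_{n-1}/\im p_n^N \cong \ker p_{n-2}^N$ — no, better to truncate ${P_M}_\bullet$ from the top instead, since $P_M$'s length is also $n$. Actually the symmetric and least error-prone phrasing: filter by the brutal truncation of the \emph{source}. A chain map out of ${P_M}_\bullet$ is determined by $(f_0,\dots,f_{n-1})$ forming a chain map of the truncated source plus a compatible $f_n\colon P_n\to N_n$, and the compatibility condition $p_n^N f_n = f_{n-1}p_n^M$ has solutions forming a torsor over $\Hom(P_n,\ker p_n^N)=\Hom(P_n,N_{n+1})$ — wait, $\ker p_n^N=0$ since ${P_N}_\bullet$ is a resolution and $p_n^N$ is injective. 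So in fact $f_n$ is \emph{uniquely} determined when it exists, and existence needs $f_{n-1}p_n^M$ to land in $\im p_n^N=\ker p_{n-1}^N$, which is automatic because $f_{n-1}p_n^M$ composed further down vanishes. This shows $\Hom_{\C^b}({P_M}_\bullet,{P_N}_\bullet)\cong \Hom_{\C^b}({P_M}_\bullet^{\le n-1}, {P_N}_\bullet^{\le n-1})$ where on the right $N_{n-1}$ is \emph{not} truncated — so the bookkeeping of which term carries the factor $\Hom(P_i,N_{i+1})$ comes from the other direction: I should instead truncate from the bottom (kill $P_0$, work with $\im p_1^M$ and $\im p_1^N$), where the relevant lifting ambiguity at the bottom is genuinely $\Hom(P_0, N_1)$ because $p_1^N\colon N_1\to N_0$ has image a proper submodule. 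So the main obstacle — and the place to be most careful — is getting the induction set up in the direction that actually produces the factors $\prod_{i=0}^{n-1}\Hom(P_i,N_{i+1})$ with the correct index shift and the single residual factor $\Hom_\A(M,N)$; modulo that indexing care, each step is just the elementary fact that lifting a morphism through one step of a projective resolution has ambiguity $\Hom(P_i,\ker)=\Hom(P_i,\im)$, which is finite and of the stated size.
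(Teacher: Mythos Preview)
Your eventual approach --- construct the chain map from degree $0$ upward and count the lifting ambiguity at each stage --- is correct and is exactly what the paper does. The paper packages it as a tower of sets $S_0,S_1,\dots$ with $|S_{j-1}|=|S_j|\cdot|\Hom_\A(P_{j-1},N_j)|$ and a surjection $\Hom_{\C^b(\A)}({P_M}_\bullet,{P_N}_\bullet)\twoheadrightarrow\Hom_\A(M,N)$ coming from the Comparison Lemma, but the content is identical to your ``torsor over $\Hom(P_i,\ker)$'' picture.

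The one genuine problem in your write-up, and the source of all the false starts, is notational: in the paper the symbol $N_{i+1}$ does \emph{not} denote the $(i{+}1)$-th projective in the resolution of $N$ (that object is called $Q_{i+1}$ in the proof), but rather the $(i{+}1)$-th syzygy $N_{i+1}:=\im q_{i+1}=\ker q_i$. With that reading the statement matches your computation on the nose: having fixed $f_0,\dots,f_{i-1}$, the set of admissible $f_i\colon P_i\to Q_i$ is a torsor over $\Hom_\A(P_i,\ker q_i)=\Hom_\A(P_i,N_{i+1})$, and at the top $i=n$ the map $q_n$ is injective so $f_n$ is unique and contributes no factor. Your sentence ``the relevant lifting ambiguity at the bottom is genuinely $\Hom(P_0,N_1)$ because $p_1^N\colon N_1\to N_0$ has image a proper submodule'' is not right if $N_1$ means the projective $Q_1$: the ambiguity is $\Hom_\A(P_0,\im q_1)$, which is strictly smaller than $\Hom_\A(P_0,Q_1)$ in general. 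Once you read $N_{i+1}$ as the syzygy, the index shift you were worried about disappears and the induction goes through cleanly without any of the detours.
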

\begin{proof}
For $f=(f_0,\cdots,f_n)\in\Hom_{\C^b(\A)}({P_M}_\bullet,{P_N}_\bullet)$, consider the following commutative diagram
%Let $f=(f_i)_{i=0}^n\in\Hom_{\C^b(\A)}({P_M}_\bullet,{P_N}_\bullet)$.
\begin{equation}\label{tu}\xymatrix@C=.9em{0\ar[r]&P_n\ar[d]_{f_n}\ar[r]^{p_n}
&P_{n-1}\ar[d]^{f_{n-1}}\ar[r]^{p_{n-1}}&\cdots\ar[r]^{p_3}&P_2\ar[d]_{f_2}\ar[rr]^{p_2}
&&P_1\ar[d]^{f_1}\ar[rr]^{p_1}\ar@{-->}@/_/[ddl]&&P_0\ar[d]^{f_0}\ar[r]^{p_0}\ar@{-->}@/_/[ddl]
&M\ar[d]\ar[r]&0\\0\ar[r]&Q_n\ar[r]^{q_n}&Q_{n-1}\ar[r]^{q_{n-1}}&\cdots\ar[r]^{q_3}&Q_2\ar[rr]^{q_2}_{g_1}\ar@{->>}[dr]_{\pi_2}
&&Q_1\ar[rr]^{q_1}_{g_0}\ar@{->>}[dr]_{\pi_1}&&Q_0\ar[r]^{q_0}&N\ar[r]&0\\&&&&&N_2\ar@{^{(}->}[ru]_{i_1}&&N_1\ar@{^{(}->}[ru]_{i_0}&&&.}
\end{equation}
By the Comparison Lemma (See \cite[Theorem 2.2.6]{Weibel}), we have a surjective map
$$\xymatrix{\varphi_1: \Hom_{\C^b(\A)}({P_M}_\bullet,{P_N}_\bullet)\ar@{->>}[r]& \Hom_{\A}(M,N).}$$
Then $K_1:=\Ker\varphi_1=\{(f_0,\cdots,f_n)\in\Hom_{\C^b(\A)}({P_M}_\bullet,{P_N}_\bullet)~|~q_0f_0=0\}$.
Set
$$S_0:=\{(g_0,f_1,\cdots,f_n)~|~\pi_1f_1=g_0p_1,f_ip_{i+1}=q_{i+1}f_{i+1},i=1,\cdots,n-1\}.$$
It is easy to obtain the following injective map
$$\xymatrix{\varphi_2: K_1\ar@{^{(}->}[r]&S_0,&(f_0,f_1,\cdots,f_n)\ar@{|->}[r]&(g_0,f_1,\cdots,f_n),}$$
where $g_0$ is uniquely determined as in the diagram (\ref{tu}) by the universal property of the kernel of $q_0$, since $q_0f_0=0$.

Conversely, for any $(g_0,f_1,\cdots,f_n)\in S_0$, set $f_0=i_0g_0$, then
$f_0p_1=i_0g_0p_1$, $q_1f_1=i_0\pi_1f_1$. Since $g_0p_1=\pi_1f_1$, we obtain that $f_0p_1=q_1f_1$.
Hence, $\varphi_2$ is bijective, and thus
$$|\Hom_{\C^b(\A)}({P_M}_\bullet,{P_N}_\bullet)|=|S_0|\cdot|\Hom_{\A}(M,N)|.$$
Set $S_1:=\{(g_1,f_2,\cdots,f_n)~|~\pi_2f_2=g_1p_2,f_ip_{i+1}=q_{i+1}f_{i+1},i=2,\cdots,n-1\}$. We have the following injective map
$$\xymatrix{\varphi_3: S_1\ar@{^{(}->}[r]&S_0,&(g_1,f_2,\cdots,f_n)\ar@{|->}[r]&(0,f_1=i_1g_1,f_2,\cdots,f_n).}$$
It is easy to check directly that there is a short exact sequence
$$\xymatrix{0\ar[r]&S_1\ar[r]&S_0\ar[r]&\Hom_{\A}(P_0,N_1)\ar[r]&0.}$$
So, $$|S_0|=|S_1|\cdot|\Hom_{\A}(P_0,N_1)|.$$
Repeating the above process, we define $S_j:=\{(g_j,f_{j+1},\cdots,f_n)~|~\pi_{j+1}f_{j+1}=g_jp_{j+1},f_ip_{i+1}=q_{i+1}f_{i+1},i=j+1,\cdots,n-1\}, 2\leq j\leq n-2$, $S_{n-1}:=\{(g_{n-1},f_n)~|~\pi_{n}f_{n}=g_{n-1}p_{n}\}$, and $S_n=0$. Moreover, $|S_{j-1}|=|S_j|\cdot|\Hom_{\A}(P_{j-1},N_j)|, 2\leq j\leq n$. Therefore, by recursion, we complete the proof.\end{proof}

\begin{lemma}\label{Hom}
\begin{equation}\begin{split}&|\Hom_{\C_2(\A)}(C_M,C_N)|=\\&\prod\limits_{i=0}^{[\frac{n}{2}]}\prod\limits_{t=2i}^{n}(|\Hom_{\A}(M_{2i},N)|\cdot|\Hom_{\A}(P_t,N_{t-2i+1})|)
\cdot\prod\limits_{i=1}^{[\frac{n}{2}]}\prod\limits_{t=0}^{n-2i-1}|\Hom_{\A}(P_t,N_{t+2i+1})|.\end{split}\end{equation}
\end{lemma}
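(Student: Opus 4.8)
The plan is to compute $|\Hom_{\C_2(\A)}(C_M,C_N)|$ by relating morphisms of $2$-cyclic complexes back to morphisms of the underlying bounded complexes of projectives, exactly in the spirit of Lemma \ref{yin1}. Recall that $C_M=\pi({P_M}_\bullet)$ and $C_N=\pi({P_N}_\bullet)$, and that by the identity quoted right after the functor $\pi$ was introduced,
$$\Hom_{\C_2(\A)}(\pi({P_M}_\bullet),\pi({P_N}_\bullet))\cong\bigoplus_{i\in\bbz}\Hom_{\C^b(\A)}({P_M}_\bullet,{P_N}_\bullet[2i]).$$
Since ${P_M}_\bullet$ is concentrated in degrees $-n,\dots,0$ and ${P_N}_\bullet$ likewise, the only shifts $[2i]$ that contribute are those with $|2i|\le n$, i.e. $i$ ranging so that $2i\in\{-n,\dots,n\}$. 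Thus the problem splits as a product over these shifts: the $i=0$ term is $|\Hom_{\C^b(\A)}({P_M}_\bullet,{P_N}_\bullet)|$, which Lemma \ref{yin1} already evaluates; the terms with $i>0$ (shift ${P_N}_\bullet[2i]$, moving $N$'s resolution to the left/down) and $i<0$ (shift to the right/up) must each be computed by the same chain-map-counting argument.

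First I would isolate the $i=0$ contribution and substitute Lemma \ref{yin1}: it contributes $\prod_{t=0}^{n-1}|\Hom_\A(P_t,N_{t+1})|\cdot|\Hom_\A(M,N)|$. Next, for a positive shift $i=m>0$, a chain map ${P_M}_\bullet\to{P_N}_\bullet[2m]$ is a family $f_t\colon P_t\to Q_{t-2m}$ (with $Q_j$ the $j$-th projective in $P_N$, where $Q_0,\dots,Q_n$ sit in degrees $0,\dots,-n$) commuting with differentials; because the target complex ${P_N}_\bullet[2m]$ is again a (shifted) projective resolution, the Comparison Lemma applies and one gets a surjection onto $\Hom_\A(M,N_{2m})$ — here $N_j=\im(p_j\colon Q_j\to Q_{j-1})$ is the $j$-th syzygy of $N$ — with kernel counted by the same telescoping short exact sequences $0\to S_{j+1}\to S_j\to \Hom_\A(P_{j-1+2m},N_j)\to 0$ as in Lemma \ref{yin1}, shifted by $2m$. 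This should give the factor $|\Hom_\A(M,N_{2m})|\cdot\prod_{t=2m}^{n}|\Hom_\A(P_t,N_{t-2m+1})|$, valid for $1\le m\le[n/2]$. For a negative shift $i=-m<0$, the roles of the two resolutions are interchanged: a chain map ${P_M}_\bullet\to{P_N}_\bullet[-2m]$ is a family $f_t\colon P_t\to Q_{t+2m}$, and since now it is the \emph{source} complex whose bottom terms $P_0,\dots$ need no constraint while the target has no term in the lowest degrees to hit, there is no $\Hom_\A(M,-)$ factor at all and one is left with a pure product $\prod_{t=0}^{n-2m-1}|\Hom_\A(P_t,N_{t+2m+1})|$ (the upper index $n-2m-1$ coming from the requirement $t+2m\le n-1$ so that the image lands in a nonzero syzygy $N_{t+2m+1}$), again for $1\le m\le[n/2]$. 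Re-indexing the positive-shift factors by $t$ running from $2m$ to $n$ to absorb the $\Hom_\A(M,N_{2m})$ term into the notation $\Hom_\A(M_{2m},N)$ — using that $M_0=M$, and more generally the displayed formula's convention — and combining all three pieces yields exactly the stated product, with $\prod_{i=0}^{[n/2]}$ covering $i=0$ together with the positive shifts (the $i=0$ slice being the $M=M_0$ piece) and $\prod_{i=1}^{[n/2]}$ covering the negative shifts.

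The main obstacle I expect is purely bookkeeping: getting the index ranges, the syzygy subscripts, and the identification of $\Hom_\A(M,N_{2i})$ with the notation $\Hom_\A(M_{2i},N)$ (which presumably reflects a duality or an adjunction between syzygies of $M$ and of $N$, or simply a relabelling convention fixed earlier) all to line up with the asymmetric-looking right-hand side, where the first double product has $t$ running from $2i$ to $n$ and the second from $0$ to $n-2i-1$. One has to be careful that $\Hom_\A(M_{2i},N)$ in the statement really plays the role of the single ``Comparison-Lemma'' factor $\Hom_\A(M,N_{2i})$ for each positive shift, and that it is constant over the inner product $\prod_{t=2i}^{n}$ (so it is repeated $n-2i+1$ times — one should double-check whether the formula intends that repetition or whether the $M_{2i}$ factor should sit outside the inner product; if the former, it is harmless since it just means the exponent bookkeeping differs, but it must match what the authors use downstream). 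Modulo that indexing care, each of the three families of factors is produced by a verbatim repetition of the surjection-plus-telescoping-exact-sequences argument already carried out in the proof of Lemma \ref{yin1}, so no genuinely new idea is required.
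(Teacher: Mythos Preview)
Your overall strategy---decompose via
\[
\Hom_{\C_2(\A)}(C_M,C_N)\cong\bigoplus_{i\in\bbz}\Hom_{\C^b(\A)}({P_M}_\bullet,{P_N}_\bullet[2i])
\]
and handle each shift by the telescoping argument of Lemma~\ref{yin1}---is exactly the paper's. Your treatment of the $i=0$ term and of the negative shifts is correct and matches the paper's.

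The gap is in the positive-shift case. You claim that for $i=m>0$ the Comparison Lemma yields a surjection onto $\Hom_\A(M,N_{2m})$. It does not: a chain map ${P_M}_\bullet\to{P_N}_\bullet[2m]$ has components $f_t\colon P_t\to Q_{t-2m}$ only for $t\ge 2m$, so $P_0$ (and hence $M$) plays no role, and there is no natural map to $\Hom_\A(M,\text{anything})$. The quantities $\Hom_\A(M,N_{2m})$ and $\Hom_\A(M_{2m},N)$ are genuinely different objects---there is no duality, adjunction, or relabelling making them coincide---so the discrepancy you flag cannot be resolved by bookkeeping.

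The paper's move is cleaner: because $f_t=0$ for $t<2m$, a chain map ${P_M}_\bullet\to{P_N}_\bullet[2m]$ is \emph{literally} a chain map ${P_{M_{2m}}}_\bullet\to{P_N}_\bullet$, where ${P_{M_{2m}}}_\bullet=[P_n\to\cdots\to P_{2m}]$ is the (minimal) projective resolution of the syzygy $M_{2m}=\im p_{2m}$. Now Lemma~\ref{yin1} applies verbatim with $M$ replaced by $M_{2m}$, producing the factor $|\Hom_\A(M_{2m},N)|$ together with $\prod_{t=2m}^{n}|\Hom_\A(P_t,N_{t-2m+1})|$. Once you make this one-line truncation-of-the-source observation, the rest of your write-up goes through unchanged.

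On your side remark about repetition: in the displayed formula the factor $|\Hom_\A(M_{2i},N)|$ is meant to appear once per $i$, not once per $t$; the way it is typeset inside $\prod_t$ is a notational infelicity, as the use of $r_3=\sum_i\dim\Hom_\A(M_{2i},N)$ in the next lemma confirms.
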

\begin{proof}
$$\Hom_{\C_2(\A)}(C_M,C_N)\cong\bigoplus_{i\in\mathbb{Z}}\Hom_{\C^b(\A)}({P_M}_\bullet,{P_N}_\bullet[2i]).$$
Clearly, if $2i>n$, $\Hom_{\C^b(\A)}({P_M}_\bullet,{P_N}_\bullet[2i])=\Hom_{\C^b(\A)}({P_M}_\bullet,{P_N}_\bullet[-2i])=0$.

For any $0\leq i\leq [\frac{n}{2}]$, \begin{equation*}\begin{split}|\Hom_{\C^b(\A)}({P_M}_\bullet,{P_N}_\bullet[2i])|&=|\Hom_{\C^b(\A)}({P_{M_{2i}}}_\bullet,{P_N}_\bullet)|\\&=
\prod\limits_{t=2i}^{n}(|\Hom_{\A}(M_{2i},N)|\cdot|\Hom_{\A}(P_t,N_{t-2i+1})|).\end{split}\end{equation*} In a similar way as Lemma \ref{yin1}, for any $1\leq i\leq [\frac{n}{2}]$, we obtain $$|\Hom_{\C^b(\A)}({P_M}_\bullet,{P_N}_\bullet[-2i])|=\prod\limits_{t=0}^{n-2i-1}|\Hom_{\A}(P_t,N_{t+2i+1})|.$$
Therefore, the proof is completed.
\end{proof}

\begin{lemma}\label{HE}
Let $M,N\in\A$. Then
\begin{equation}\begin{split}\frac{|\Hom_{\C_2(\A)}(C_M,C_N)|}{|\Hom_{\A}(M,N)|}&=q^{\lr{\hat{M}_{\rm {even}},\hat{N}}+\lr{\hat{P}_M^{\rm even},\hat{N}_{\rm {odd}}}+\lr{\hat{P}_M^{\rm odd},\hat{N}_{\rm even}}+w_0}.
\end{split}\end{equation}
where $w_0=\sum\limits_{i=1}^{[\frac{n}{2}]}\sum\limits_{t=2i+1}^{n}(-1)^{t-1}\dim\Ext^{t}_{\A}(M,N)$. In particular, if $w_0=0$,
then
$$\frac{|\Hom_{\C_2(\A)}(C_M,C_N)|}{|\Hom_{\A}(M,N)|}=v^{\lr{\hat{M},\tau(N)}-\lr{\tau(M),\hat{N}}+\lr{\hat{P}_M^{\rm odd},\hat{P}_N^{\rm odd}}+\lr{\hat{P}_M^{\rm even},\hat{P}_N^{\rm even}}-\lr{\hat{M},\hat{N}}}.$$
\end{lemma}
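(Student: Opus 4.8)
The plan is to start from Lemma \ref{Hom}, which expresses $|\Hom_{\C_2(\A)}(C_M,C_N)|$ as a product of cardinalities of $\Hom$-spaces in $\A$, and to rewrite the exponent entirely in terms of the Euler form. First I would divide both sides by $|\Hom_\A(M,N)|$: the double product $\prod_{i=0}^{[n/2]}\prod_{t=2i}^{n}|\Hom_\A(M_{2i},N)|$ contributes $|\Hom_\A(M,N)| = |\Hom_\A(M_0,N)|$ when $i=0$ (so this one factor cancels), together with $\prod_{i=1}^{[n/2]}\prod_{t=2i}^{n}|\Hom_\A(M_{2i},N)|$. Taking $\dim_k$, each surviving factor becomes a power of $q$, so I need only track exponents. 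The term $\sum_{i=1}^{[n/2]}\sum_{t=2i}^{n}\dim\Hom_\A(M_{2i},N)$ simplifies because the summand does not depend on $t$: it equals $\sum_{i=1}^{[n/2]}(n-2i+1)\dim\Hom_\A(M_{2i},N)$, and since $\hat M_{\rm even}=\sum_{i\geq 1}\hat M_{2i}$ I would relate $\sum_i \dim\Hom_\A(M_{2i},N)$ to $\lr{\hat M_{\rm even},\hat N}$ up to higher $\Ext$ corrections (using that $\Ext^j_\A(M_{2i},N)\cong\Ext^{j+2i}_\A(M,N)$ from the syzygy shift, since $M_{2i}$ is a $2i$-th syzygy of $M$ along the \emph{minimal} resolution).

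The heart of the computation is exactly this syzygy-shift bookkeeping. For each $\ell\geq 1$ one has $\dim\Hom_\A(M_\ell,N)-\dim\Ext^1_\A(M_\ell,N)+\cdots = \lr{\hat M_\ell,\hat N}$, and the higher terms are $\Ext$-groups that, via dimension shifting along the minimal resolution, become $\Ext^{\geq 2}_\A(M,N)$ shifted in degree. Collecting all these over the various $i$ and $t$ ranges in Lemma \ref{Hom} should produce, after cancellation of the genuine $\Ext$-contributions that appear with opposite signs, the stated combination $\lr{\hat M_{\rm even},\hat N} + \lr{\hat P_M^{\rm even},\hat N_{\rm odd}} + \lr{\hat P_M^{\rm odd},\hat N_{\rm even}}$ together with the leftover error term $w_0=\sum_{i=1}^{[n/2]}\sum_{t=2i+1}^{n}(-1)^{t-1}\dim\Ext^t_\A(M,N)$. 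Here the $|\Hom_\A(P_t,N_{t-2i+1})|$ and $|\Hom_\A(P_t,N_{t+2i+1})|$ factors are handled by the simpler fact that $N_s=\im p_s^N$ has a $\Hom_\A(P_t,-)$-acyclic resolution, so $\dim\Hom_\A(P_t,N_s)$ telescopes into differences $\lr{\hat P_t,\hat N_s}$ with no higher $\Ext$ (projectivity of $P_t$); summing these against the two index families gives the $\lr{\hat P_M^{\rm even},\hat N_{\rm odd}}$ and $\lr{\hat P_M^{\rm odd},\hat N_{\rm even}}$ pieces. Replacing $q$ by $v^2$ throughout gives the first displayed formula.

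For the ``in particular'' clause, assuming $w_0=0$ I would convert the exponent (now a multiple of $\lr{-,-}$, divided by $2$ after passing to $v$) into the claimed symmetric-looking expression. The identities I would use are: $\hat N = \hat N_{\rm even}^{\,?}$-type relations coming from the resolution, namely $\tau(N) = \hat N_{\rm odd} - \hat N_{\rm even}$ together with $\hat P_N^{\rm even} - \hat P_N^{\rm odd} = \hat N$ (alternating sum of a resolution) and $\hat P_N^{\rm odd} = \hat N_{\rm odd} + (\text{shift})$, $\hat N_{\rm even} = \hat P_N^{\rm even}-\cdots$; more precisely $\hat N_s = \sum_{j\geq s}(-1)^{j-s}\hat P_j^N$, so $\hat N_{\rm odd}$ and $\hat N_{\rm even}$ are explicit $\bbz$-linear combinations of the $\hat P_j^N$. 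Substituting these, and the analogous relations on the $M$-side ($\hat M_{\rm even}$, $\hat P_M^{\rm even}$, $\hat P_M^{\rm odd}$ in terms of $\hat M$, $\tau(M)$), into $\lr{\hat M_{\rm even},\hat N}+\lr{\hat P_M^{\rm even},\hat N_{\rm odd}}+\lr{\hat P_M^{\rm odd},\hat N_{\rm even}}$ and expanding bilinearly should collapse to $\tfrac12\big(\lr{\hat M,\tau(N)}-\lr{\tau(M),\hat N}+\lr{\hat P_M^{\rm odd},\hat P_N^{\rm odd}}+\lr{\hat P_M^{\rm even},\hat P_N^{\rm even}}-\lr{\hat M,\hat N}\big)$, whence the $v$-power form.

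The main obstacle I anticipate is the precise combinatorial accounting in the second paragraph: keeping track of which higher-$\Ext$ contributions cancel and which survive into $w_0$, since the index ranges $\sum_{t=2i}^{n}$ versus $\sum_{t=2i+1}^{n}$ differ by exactly the boundary terms, and the syzygy shift $\Ext^j_\A(M_{2i},N)\cong\Ext^{j+2i}_\A(M,N)$ must be applied uniformly while respecting that it is the \emph{minimal} resolution that makes the syzygies $M_i$ well-defined. The passage from the $q$-exponent to the symmetric $v$-form in the last paragraph is then purely formal bilinear algebra in $K(\A)$, modulo correctly recording the resolution relations $\hat N_s=\sum_{j\geq s}(-1)^{j-s}\hat P_j^N$.
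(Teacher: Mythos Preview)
Your overall strategy is the same as the paper's: pass to exponents from Lemma~\ref{Hom}, convert the $\Hom_\A(M_{2i},N)$ contributions into $\lr{\hat M_{\rm even},\hat N}$ plus higher-$\Ext$ correction terms via the syzygy shift $\Ext^j_\A(M_{2i},N)\cong\Ext^{j+2i}_\A(M,N)$, and reduce the $\Hom_\A(P_t,N_s)$ factors to $\lr{\hat P_t,\hat N_s}$ by projectivity before regrouping them into $\lr{\hat P_M^{\rm even},\hat N_{\rm odd}}+\lr{\hat P_M^{\rm odd},\hat N_{\rm even}}$. For the second formula the paper also does pure bilinear algebra in $K(\A)$.

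There is, however, a genuine slip in your reading of Lemma~\ref{Hom}. You treat $|\Hom_\A(M_{2i},N)|$ as appearing once for each pair $(i,t)$, giving $\sum_{i\geq1}(n-2i+1)\dim\Hom_\A(M_{2i},N)$, and then silently drop the multiplicity $(n-2i+1)$ when relating this to $\lr{\hat M_{\rm even},\hat N}$. In fact the displayed product in Lemma~\ref{Hom} is slightly misleading: its proof invokes Lemma~\ref{yin1}, which shows that $|\Hom_{\C^b(\A)}({P_{M_{2i}}}_\bullet,{P_N}_\bullet)|$ contains $|\Hom_\A(M_{2i},N)|$ exactly \emph{once}, with only the $|\Hom_\A(P_t,N_{t-2i+1})|$ running over $t$. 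So the correct contribution is $\sum_{i=0}^{[n/2]}\dim\Hom_\A(M_{2i},N)$ (the paper calls this $r_3$), and after removing the $i=0$ term $r_1=\dim\Hom_\A(M,N)$ one gets precisely $\lr{\hat M_{\rm even},\hat N}+w_0$. Once you fix this multiplicity your argument matches the paper's.

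For the ``in particular'' part, the paper uses two clean identities you may find more efficient than the general formula $\hat N_s=\sum_{j\geq s}(-1)^{j-s}\hat P_j^N$: namely
\[
\hat P_M^{\rm odd}=\hat M_{\rm odd}+\hat M_{\rm even},\qquad \hat P_M^{\rm even}=\hat M_{\rm odd}+\hat M_{\rm even}+\hat M,
\]
which follow from the short exact sequences $0\to M_{j+1}\to P_j\to M_j\to 0$. Substituting these (and the $N$-analogues) directly into the target expression and expanding yields $2\big(\lr{\hat M_{\rm even},\hat N}+\lr{\hat P_M^{\rm even},\hat N_{\rm odd}}+\lr{\hat P_M^{\rm odd},\hat N_{\rm even}}\big)$ in a few lines.
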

\begin{proof} Let
\begin{equation*}\begin{split}
r_1&=\dim\Hom_{\A}(M,N),\\
r_2&=\dim\Hom_{\C_2(\A)}(C_M,C_N),\\
r_3&=\sum\limits_{i=0}^{[\frac{n}{2}]}\dim\Hom_{\A}(M_{2i},N).
\end{split}\end{equation*}
By Lemma \ref{Hom},
\begin{equation*}\begin{split}
r_2=&r_3+\sum\limits_{i=0}^{[\frac{n}{2}]}\sum\limits_{t=2i}^{n}
\dim\Hom_{\A}(P_t,N_{t-2i+1})+\sum\limits_{i=1}^{[\frac{n}{2}]}\sum\limits_{t=0}^{n-2i+1}\dim\Hom_{\A}(P_t,N_{t+2i+1})\\
=&r_3+\sum_{t=0}^n\lr{\hat{P}_t,\hat{N}_{t+1}}+\sum\limits_{i=1}^{[\frac{n}{2}]}
(\sum\limits_{t=2i}^{n}\lr{\hat{P}_t,\hat{N}_{t-2i+1}}+\sum\limits_{t=0}^{n-2i+1}\lr{\hat{P}_t,\hat{N}_{t+2i+1}})\\
=&r_3+\lr{\hat{P}_0,\hat{N}_{\rm odd}}+\lr{\hat{P}_1,\hat{N}_{\rm even}}+\sum_{t=2}^n\lr{\hat{P}_t,\frac{1+(-1)^t}{2}\hat{N}_{\rm odd}+\frac{1-(-1)^t}{2}\hat{N}_{\rm even}}\\
=&r_3+\lr{\hat{P}_M^{\rm even},\hat{N}_{\rm {odd}}}+\lr{\hat{P}_M^{\rm odd},\hat{N}_{\rm even}}.
\end{split}\end{equation*}
In fact,
\begin{equation*}\begin{split}
r_3
&=\sum\limits_{i=1}^{[\frac{n}{2}]}\lr{\hat{M}_{2i},\hat{N}}+\sum\limits_{i=1}^{[\frac{n}{2}]}\sum\limits_{t=1}^{n-2i-1}(-1)^{t-1}\dim\Ext^{t}_{\A}(M_{2i},N)\\
&=\lr{\hat{M}_{\rm even},\hat{N}}+\sum\limits_{i=1}^{[\frac{n}{2}]}\sum\limits_{t=2i+1}^{n}(-1)^{t-1}\dim\Ext^{t}_{\A}(M,N)\\
&=\lr{\hat{M}_{\rm even},\hat{N}}+w_0.
\end{split}\end{equation*}
Therefore, $r_2-r_1=\lr{\hat{M}_{\rm {even}},\hat{N}}+\lr{\hat{P}_M^{\rm even},\hat{N}_{\rm {odd}}}+\lr{\hat{P}_M^{\rm odd},\hat{N}_{\rm even}}+w_0.$
So we have proved the first equality.

On the other hand, by definition, it is easy to see that
\begin{equation}\hat{P}_M^{\rm odd}=\hat{M}_{\rm odd}+\hat{M}_{\rm even}~~\text{and}~~\hat{P}_M^{\rm even}=\hat{M}_{\rm odd}+\hat{M}_{\rm even}+\hat{M}.\end{equation}
Hence, \begin{equation*}\begin{split}
&\lr{\hat{M},\tau(N)}-\lr{\tau(M),\hat{N}}+\lr{\hat{P}_M^{\rm odd},\hat{P}_N^{\rm odd}}+\lr{\hat{P}_M^{\rm even},\hat{P}_N^{\rm even}}-\lr{\hat{M},\hat{N}}\\&=
\lr{\hat{M},\hat{N}_{\rm odd}-\hat{N}_{\rm even}}-\lr{\hat{M}_{\rm odd}-\hat{M}_{\rm even},\hat{N}}+
\lr{\hat{M}_{\rm odd}+\hat{M}_{\rm even},\hat{N}_{\rm odd}+\hat{N}_{\rm even}}\\
&+\lr{\hat{M}_{\rm odd}+\hat{M}_{\rm even}+\hat{M},\hat{N}_{\rm odd}+\hat{N}_{\rm even}+\hat{N}}-
\lr{\hat{M},\hat{N}}\\
&=2\lr{\hat{M}_{\rm even},\hat{N}}+2\lr{\hat{M},\hat{N}_{\rm odd}}+2\lr{\hat{M}_{\rm odd}+\hat{M}_{\rm even},\hat{N}_{\rm odd}+\hat{N}_{\rm even}}\\
&=2\lr{\hat{M}_{\rm even},\hat{N}}+2\lr{\hat{P}_M^{\rm even}-\hat{P}_M^{\rm odd},\hat{N}_{\rm odd}}+2\lr{\hat{P}_M^{\rm odd},\hat{N}_{\rm odd}+\hat{N}_{\rm even}}\\
&=2(\lr{\hat{M}_{\rm {even}},\hat{N}}+\lr{\hat{P}_M^{\rm even},\hat{N}_{\rm {odd}}}+\lr{\hat{P}_M^{\rm odd},\hat{N}_{\rm even}}).
\end{split}\end{equation*}
Therefore we complete the proof.
\end{proof}

Let $\varphi$ be the $\mathbb{C}$-linear map defined by
$\varphi: {\mathcal {H}}_{\rm{tw}}(\mathcal{A})\longrightarrow \mathcal {D}\mathcal {H}_2(\mathcal{A}), [M]\longmapsto E_M$.
\begin{proposition}\label{zm}
The map $\varphi: {\mathcal {H}}_{\rm{tw}}(\mathcal{A})\longrightarrow \mathcal {D}\mathcal {H}_2(\mathcal{A})$ is injective. That is, $\{E_M~|~M\in\Iso(\A)\}$ is a set of linearly independent elements in $\mathcal {D}\mathcal {H}_2(\mathcal{A})$.
\end{proposition}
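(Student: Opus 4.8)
The plan is to realise each $E_M$ as a nonzero scalar multiple of a distinct vector in a natural $\mathbb{C}$-basis of $\mathcal{D}\mathcal{H}_2(\mathcal{A})$, after which linear independence is immediate. First I would recall that basis. By Lemma \ref{zero} the acyclic complexes in $\mathcal{C}_2(\mathscr{P})$ are exactly the $K_P\oplus K_Q^\ast$, so every $L_\bullet\in\mathcal{C}_2(\mathscr{P})$ splits uniquely as $L_\bullet\cong L_\bullet^{\mathrm{red}}\oplus K_P\oplus K_Q^\ast$ with $L_\bullet^{\mathrm{red}}$ having no direct summand isomorphic to any $K_P$ or $K_Q^\ast$; call such $L_\bullet^{\mathrm{red}}$ \emph{reduced}. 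Using Lemma \ref{formula} to pull the acyclic summands out as invertible factors $K_{\hat P}$, $K_{\hat Q}^\ast$, and using that the $K_\alpha$, $K_\beta^\ast$ ($\alpha,\beta\in K(\mathcal{A})$) are invertible and commute with one another, one obtains, as in \cite{Br}, that
$$\{\,K_\alpha K_\beta^\ast[L_\bullet]\ :\ \alpha,\beta\in K(\mathcal{A}),\ L_\bullet\in\mathcal{C}_2(\mathscr{P})\ \text{reduced}\,\}$$
is a $\mathbb{C}$-basis of $\mathcal{D}\mathcal{H}_2(\mathcal{A})$.

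Next I would show that for every $M\in\A$ the complex $C_M=\pi({P_M}_\bullet)$ is reduced. Folding the minimal resolution (\ref{jx}), the two differentials of $C_M$ have, with respect to the decompositions of $P_M^{\rm even}$ and $P_M^{\rm odd}$ into indecomposables coming from those of the $P_i$, matrix entries that are precisely the entries of the maps $p_i$ (together with zeros); by minimality none of these is an isomorphism, so $C_M$ has no summand of the form $K_P$ or $K_Q^\ast$. Hence
$$E_M=v^{\lr{\tau(M),\hat M}}\,K_{-\hat M_{\rm odd}}\,K_{-\hat M_{\rm even}}^\ast\,[C_M]$$
is exactly the nonzero scalar $v^{\lr{\tau(M),\hat M}}$ times the basis vector attached to the triple $(\alpha,\beta,L_\bullet)=(-\hat M_{\rm odd},-\hat M_{\rm even},C_M)$. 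It then remains only to check that $M\mapsto C_M$ is injective on $\Iso(\A)$: a direct homology computation using exactness of (\ref{jx}) gives $H_1(C_M)=0$ and $H_0(C_M)\cong\Coker p_1\cong M$, and since $H_\ast$ carries isomorphic complexes to isomorphic objects of $\A$, an isomorphism $C_M\cong C_N$ forces $M\cong N$. Consequently $\{E_M\mid M\in\Iso(\A)\}$ consists of nonzero scalar multiples of pairwise distinct basis elements of $\mathcal{D}\mathcal{H}_2(\mathcal{A})$, hence is linearly independent, i.e.\ $\varphi$ is injective.

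The individual computations above are all short; the one step that genuinely relies on the structure theory of \cite{Br} rather than a naive manipulation is the first — namely that the subgroup generated by the $K_\alpha$ and $K_\beta^\ast$ is free enough that no collapsing occurs among the $K_\alpha K_\beta^\ast[L_\bullet]$, so that these really form a basis and not just a spanning set. Granting that, the reducedness of $C_M$ and the identification $H_0(C_M)\cong M$ are routine, and the proof closes as indicated.
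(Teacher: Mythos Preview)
Your argument is correct and reaches the same conclusion, but the route differs from the paper's. The paper proceeds by writing down an explicit $\mathbb{C}$-linear retraction
\[
\psi:\mathcal{D}\mathcal{H}_2(\mathcal{A})\longrightarrow\mathcal{H}_{\rm tw}(\mathcal{A}),\qquad [K_P\oplus K_Q^\ast]^{-1}\ast[M_\bullet]\longmapsto v^{\lr{\widehat{\Ker d_1}-\widehat{\im d_1},\,\widehat{H_0(M_\bullet)}}}[H_0(M_\bullet)],
\]
and simply checks $\psi\varphi=1$. You instead locate each $E_M$ as a nonzero scalar multiple of a distinct member of the basis $\{K_\alpha K_\beta^\ast[L_\bullet]\}$ of $\mathcal{D}\mathcal{H}_2(\mathcal{A})$. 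Both approaches ultimately rest on the same computation $H_0(C_M)\cong M$; the difference is packaging. The paper's retraction is shorter to state and does not require verifying that $C_M$ is reduced, but to know that $\psi$ is well defined one implicitly needs the very basis description you invoke (since the expressions $[K_P\oplus K_Q^\ast]^{-1}\ast[M_\bullet]$ must be linearly independent for the formula to determine a linear map). Your version makes that dependence explicit, at the cost of the extra step about reducedness of $C_M$. One small remark on that step: the sentence ``none of the matrix entries is an isomorphism, so $C_M$ has no summand $K_P$ or $K_Q^\ast$'' is better phrased via the radical, namely $\im d_i\subseteq\rad$ for the folded differentials, which is what minimality of (\ref{jx}) actually gives and which immediately excludes contractible summands.
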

\begin{proof}
Let $\psi$ be the $\mathbb{C}$-linear map defined by
$$\psi: \mathcal {D}\mathcal {H}_2(\mathcal{A})\longrightarrow{\mathcal {H}}_{\rm{tw}}(\mathcal{A}),
[K_P\oplus K_Q^*]^{-1}\ast[M_\bullet]\longmapsto v^{\lr{\hat{\Ker} d_1-\hat{\im} d_1,\hat{H_0}(M_\bullet)}}[H_0(M_\bullet)],$$
where $P,Q\in\mathscr{P}$ and $M_\bullet=\xymatrix{{M_1}\ar@<0.7ex>[r]^{d_1}&{M_0.}\ar@<0.7ex>[l]^{d_0}}$
Then it is easy to check that $\psi\varphi=1$. Therefore, $\varphi$ is injective.
\end{proof}
\begin{theorem}\label{zd}
Let $M,N\in\A$. Then
$\varphi([M]\ast[N])=\varphi([M])\ast\varphi([N])$ if and only if 
$\Ext^i_{\A}(M,N)=0$ for any $i\geq 3$.
\end{theorem}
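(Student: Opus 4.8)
The plan is to compare the two sides of the claimed identity directly by unwinding the definition of $\varphi$ and of the twisted multiplications on both algebras. Write $E_M = v^{\lr{\tau(M),\hat M}}K_{-\hat M_{\rm odd}}K^\ast_{-\hat M_{\rm even}}[C_M]$, and similarly for $E_N$. Using Lemma \ref{formula} (the commutation relations of the $K_\alpha$ and $K^\ast_\alpha$ with arbitrary complexes, extended to all of $K(\A)$) I would move all the $K$-factors to the front of $\varphi([M])\ast\varphi([N])$, collecting a power of $v$ that depends only on the Euler form and on the classes $\hat M_{\rm odd},\hat M_{\rm even},\hat N_{\rm odd},\hat N_{\rm even},\hat M,\hat N$. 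This reduces the right-hand side to $v^{(\text{explicit exponent})}\,K_{-\hat M_{\rm odd}-\hat N_{\rm odd}}K^\ast_{-\hat M_{\rm even}-\hat N_{\rm even}}\big([C_M]\ast[C_N]\big)$, where $[C_M]\ast[C_N]$ is the twisted product in $\mathcal H_{\rm tw}(\mathcal C_2(\mathscr P))$.

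Next I would expand $[C_M]\ast[C_N]$ using the Hall-algebra definition (Definition \ref{Hall algebra of abelian category}) and Lemma \ref{Ext to Hom}: the structure constants are governed by $|\Ext^1_{\mathcal C_2(\A)}(C_N,C_M)|/|\Hom_{\mathcal C_2(\A)}(C_N,C_M)|$, and $\Ext^1_{\mathcal C_2(\A)}(C_N,C_M)\cong\Hom_{\mathcal K_2(\A)}(C_N,C_M^\ast)$. The key observation is that $C_M$ is built from a projective resolution, so every extension $0\to C_M\to L_\bullet\to C_N\to 0$ in $\mathcal C_2(\mathscr P)$ has $L_\bullet$ again a complex of projectives whose homology is determined by $H_0$, and in fact (as in Bridgeland's argument and as used implicitly in Proposition \ref{zm}) the only complexes appearing are of the form (acyclic)$\,\oplus\,C_L$ where $L$ runs over the middle terms of short exact sequences $0\to N\to L\to M\to 0$ in $\A$ — this is exactly the content that makes $\psi$ a well-defined left inverse. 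Granting this, $[C_M]\ast[C_N]$ is a sum over $[L]$ of (structure constant)$\cdot K_{(\cdots)}K^\ast_{(\cdots)}[C_L]$, and regrouping gives $\varphi([M])\ast\varphi([N]) = \sum_{[L]} c_{MN}^L\, E_L$ for explicit coefficients $c_{MN}^L$. The identity $\varphi([M]\ast[N])=\varphi([M])\ast\varphi([N])$ then holds if and only if $c_{MN}^L = v^{\lr{\hat M,\hat N}}\cdot g_{MN}^L \cdot |\Hom_\A(M,N)|^{-1}\cdot(\text{normalization})$ matches the Ringel--Hall structure constant $v^{\lr{\hat M,\hat N}}|\Ext^1_\A(M,N)_L|/|\Hom_\A(M,N)|$ for every $L$.

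The decisive computation is then the comparison of exponents of $v$. On the Bridgeland side the relevant power involves $|\Hom_{\mathcal C_2(\A)}(C_N,C_M)|$ and $|\Hom_{\mathcal C_2(\A)}(C_M,C_N)|$ (via the Hall numerator/denominator and the $K$-commutations), and Lemma \ref{HE} computes precisely $|\Hom_{\mathcal C_2(\A)}(C_M,C_N)|/|\Hom_\A(M,N)|$ as a power of $q=v^2$ with a correction term $w_0 = \sum_{i=1}^{[n/2]}\sum_{t=2i+1}^n (-1)^{t-1}\dim\Ext^t_\A(M,N)$. By the last sentence of Lemma \ref{HE}, when $w_0=0$ this ratio equals exactly the power of $v$ needed to make everything collapse to the Ringel--Hall product; conversely if $w_0\ne 0$ the exponents differ by a nonzero integer and the products cannot agree (linear independence of the $E_L$, Proposition \ref{zm}, is what lets me conclude the coefficients must match termwise, so a mismatch in even one exponent breaks the identity). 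So the proof reduces to: $\varphi([M]\ast[N])=\varphi([M])\ast\varphi([N])$ $\iff$ $w_0 = 0$ (symmetrically, the analogous $w_0$ with $M,N$ swapped, coming from $|\Hom_{\mathcal C_2(\A)}(C_N,C_M)|$, also vanishes) $\iff$ all the higher $\Ext^t_\A(M,N)$ with $t\ge 3$ vanish, the last equivalence being a straightforward sign/telescoping argument: $w_0$ is an alternating partial sum of the $\dim\Ext^t$, and forcing it to vanish for the given projective resolution (and for all $M,N$, or by the uniqueness of minimal resolutions) is equivalent to $\Ext^t_\A(M,N)=0$ for $t\ge 3$.

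The main obstacle I anticipate is the bookkeeping in the middle step — verifying that in $\mathcal C_2(\mathscr P)$ the extension classes of $C_N$ by $C_M$ biject (with the correct Hall multiplicities) with pairs consisting of an acyclic summand and a complex $C_L$ for $L$ an extension of $M$ by $N$ in $\A$, so that $[C_M]\ast[C_N]$ really is a sum of the $E_L$'s with no extra terms. This is the same phenomenon underlying the well-definedness of $\psi$ in Proposition \ref{zm}, but it has to be made quantitative here; once it is in place, the rest is the exponent comparison powered by Lemma \ref{HE}, which has already been done.
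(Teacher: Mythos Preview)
Your overall strategy matches the paper's, but there is a genuine gap in the necessity direction, and it is exactly the ``main obstacle'' you flag and then dismiss. You assert that every middle term $X_\bullet$ of a short exact sequence $0\to C_N\to X_\bullet\to C_M\to 0$ in $\mathcal C_2(\mathscr P)$ is of the form (acyclic)$\,\oplus\,C_L$ with $L$ an extension of $M$ by $N$; you justify this by appeal to $\psi$. But $\psi$ only computes $H_0$, and the well-definedness of $\psi$ says nothing about whether $X_\bullet$ is isomorphic to such a direct sum, nor about how many extension classes yield a given isomorphism type. In fact the paper computes
\[
\Ext^1_{\mathcal C_2(\A)}(C_M,C_N)\cong\bigoplus_{i\ge 0}\Ext^{2i+1}_{\A}(M,N),
\]
so when some $\Ext^{2i+1}_{\A}(M,N)$ with $i\ge 1$ is nonzero there are strictly more extension classes in $\mathcal C_2(\mathscr P)$ than those produced by the Horseshoe construction from $\Ext^1_{\A}(M,N)$. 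The paper's necessity argument uses precisely these surplus terms: it writes $[C_M]\ast[C_N]$ as a sum over the Horseshoe complexes $C'_L$ \emph{plus} extra terms $[X_\bullet]$ with $X_\bullet\not\cong C'_L$, and then linear independence in $\mathcal D\mathcal H_2(\A)$ forces all the extra coefficients $b_{X_\bullet}(q)$ to vanish, which yields $\Ext^{2i+1}_{\A}(M,N)=0$ for $i\ge 1$.

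This matters for your final step as well. Your claim that ``$w_0=0\iff\Ext^t_{\A}(M,N)=0$ for all $t\ge 3$'' is false as stated: $w_0=\sum_{t\ge 3}\lfloor(t-1)/2\rfloor(-1)^{t-1}\dim\Ext^t_{\A}(M,N)$ is an alternating sum and can vanish by cancellation (e.g.\ $\dim\Ext^3=\dim\Ext^4=1$, all higher zero). The paper avoids this trap by first obtaining $\Ext^{2i+1}_{\A}(M,N)=0$ for $i\ge 1$ from the extra $[X_\bullet]$ terms as above; only then does $w_0$ reduce to $-\sum_{j\ge 2}(j-1)\dim\Ext^{2j}_{\A}(M,N)$, a sum of nonpositive terms, so that $w_0=0$ forces the remaining even $\Ext^{2j}_{\A}(M,N)$ to vanish. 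In short, the necessity direction requires two separate mechanisms --- the exotic middle terms kill the odd higher Ext's, and then the exponent comparison via Lemma~\ref{HE} kills the even ones --- and your outline collapses these into one, which does not work.
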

\begin{proof}
Note that
\begin{equation*}\begin{split}
\Ext^1_{\C_2(\A)}(C_M,C_N)&\cong\Hom_{{\mathcal {K}}_2(\mathscr{P})}(C_M,C_N[1])\\
&\cong\Hom_{\mathcal{R}_2(\A)}(M,N[1])\\&\cong\bigoplus\limits_{i\geq0}\Hom_{\mathcal{D}^b(\A)}(M,N[2i+1])\\&\cong
\bigoplus\limits_{i\geq0}\Ext^{2i+1}_{\A}(M,N).
\end{split}
\end{equation*}
Suppose $\Ext^i_{\A}(M,N)=0$ for any $i\geq 3$. Then $\Ext^1_{\C_2(\A)}(C_M,C_N)\cong \Ext_{\A}^1(M,N)$.
Let $0\longrightarrow N\longrightarrow L\longrightarrow M\longrightarrow 0$ be a short exact sequence, and $P_M, P_N$ be the minimal projective resolutions of $M$ and $N$, respectively. Then by Horse-shoe Lemma (see \cite[Section 2.2]{Weibel}) $P_M$ and $P_N$ induce a projective resolution $P'_L$ of $L$. Hence, we have the short exact sequence
$0\longrightarrow C_N\longrightarrow C'_L\longrightarrow C_M\longrightarrow 0$.
It is easy to see that
\begin{equation*}\hat{M}_{\rm{odd}}+\hat{N}_{\rm{odd}}=\hat{L}'_{\rm{odd}}~~\text{and}~~\hat{M}_{\rm {even}}+\hat{N}_{\rm {even}}=\hat{L}'_{\rm{even}}.\end{equation*}
Hence,
\begin{equation*}\begin{split}
&\varphi([M])\ast\varphi([N])\\&=v^{\lr{\tau(M),\hat{M}}+\lr{\tau(N),\hat{N}}}\ast K_{-\hat{M}_{\rm{odd}}}\ast K_{-\hat{M}_{\rm {even}}}^\ast \ast[C_M]\ast K_{-\hat{N}_{\rm{odd}}}\ast K_{-\hat{N}_{\rm {even}}}^\ast \ast[C_N]\\
&=v^{\lr{\tau(M),\hat{M}}+\lr{\tau(N),\hat{N}}+(\hat{N}_{\rm {odd}},\hat{M})-(\hat{N}_{\rm {even}},\hat{M})}\ast K_{-(\hat{M}_{\rm{odd}}+\hat{N}_{\rm{odd}})}\ast K_{-(\hat{M}_{\rm {even}}+\hat{N}_{\rm {even}})}^\ast\ast[C_M]\ast[C_N]\\
&=v^{t_0}\ast K_{-(\hat{M}_{\rm{odd}}+\hat{N}_{\rm{odd}})}\ast K_{-(\hat{M}_{\rm {even}}+\hat{N}_{\rm {even}})}^\ast\ast\sum_{[L]}\frac{|\Ext^1_{\C_2(\A)}(C_M,C_N)_{C'_L}|}{|\Hom_{\C_2(\A)}(C_M,C_N)|}\ast[C'_L]\\
&=\frac{v^{t_0}}{|\Hom_{\C_2(\A)}(C_M,C_N)|}\ast
\sum_{[L]}|\Ext^1_{\A}(M,N)_L|\ast K_{-\hat{L}'_{\rm{odd}}}\ast K_{-\hat{L}'_{\rm{even}}}^\ast \ast[C'_L]\\
&=\frac{v^{t_1}}{|\Hom_{\C_2(\A)}(C_M,C_N)|}\ast
\sum_{[L]}|\Ext^1_{\A}(M,N)_L|\ast E_L.
\end{split}
\end{equation*}
where $$t_0=\lr{\tau(M),\hat{M}}+\lr{\tau(N),\hat{N}}+(\tau(N),\hat{M})+\lr{\hat{P}_M^{\rm odd},\hat{P}_N^{\rm odd}}+\lr{\hat{P}_M^{\rm even},\hat{P}_N^{\rm even}}$$ and \begin{equation*}\begin{split}t_1&=t_0-\lr{\tau(M)+\tau(N),\hat{M}+\hat{N}}\\
&=\lr{\hat{M},\tau(N)}-\lr{\tau(M),\hat{N}}+\lr{\hat{P}_M^{\rm odd},\hat{P}_N^{\rm odd}}+\lr{\hat{P}_M^{\rm even},\hat{P}_N^{\rm even}}.\end{split}\end{equation*}

On the other hand,
\begin{equation}\label{Hall-mul}\begin{split}
\varphi([M]\ast[N])=\frac{v^{\lr{\hat{M},\hat{N}}}}{|\Hom_{\A}(M,N)|}\sum_{[L]}|\Ext^1_{\A}(M,N)_L|\ast E_L.
\end{split}\end{equation}
Note that $w_0=0$. By Lemma \ref{HE}, the sufficiency is proved.

Conversely, suppose $\varphi([M]\ast[N])=\varphi([M])\ast\varphi([N])$. In general,
\begin{equation}\label{zh}
\begin{aligned}
&\varphi([M])\ast\varphi([N])\\=&v^{t_0}\ast K_{-(\hat{M}_{\rm{odd}}+\hat{N}_{\rm{odd}})}\ast K_{-(\hat{M}_{\rm {even}}+\hat{N}_{\rm {even}})}^\ast\ast(\sum_{[L]}\frac{|\Ext^1_{\C_2(\A)}(C_M,C_N)_{C'_L}|}{|\Hom_{\C_2(\A)}(C_M,C_N)|}\ast[C'_L]\\
&+\sum_{[X_\bullet]\not=[C'_L]}\frac{|\Ext^1_{\C_2(\A)}(C_M,C_N)_{X_\bullet}|}{|\Hom_{\C_2(\A)}(C_M,C_N)|}\ast[X_\bullet])\\
=&\sum_{[L]}a_L(q) E_L+\sum_{[X^{\cdot}]\not=[C'_L]}b_{X^{\cdot}}(q)K_{-(\hat{M}_{\rm{odd}}+\hat{N}_{\rm{odd}})}\ast K_{-(\hat{M}_{\rm {even}}+\hat{N}_{\rm {even}})}^\ast\ast[X_\bullet],
\end{aligned}
\end{equation}
where $a_L(q)$ and $b_{X^{\cdot}}(q)$ belong to the rational function field $Q(q)$.
Since all $[C'_L]$ and $[X_\bullet]$ appearing in the sum are linearly independent in
$\mathcal {H}_{\rm{tw}}(\mathcal{C}_2(\mathscr{P}))$, it follows that all
$E_L$
and $K_{-(\hat{M}_{\rm{odd}}+\hat{N}_{\rm{odd}})}\ast K_{-(\hat{M}_{\rm {even}}+\hat{N}_{\rm {even}})}^\ast\ast[X_\bullet]$
are also linearly independent in
$\mathcal {D}\mathcal {H}_2(\mathcal{A})$. Comparing the formula (\ref{zh}) with (\ref{Hall-mul}), we get
that $\Ext^{2i+1}_{\A}(M,N)=0$ for any $i\geq 1$. Furthermore, by Lemma \ref{HE}, we conclude that $\Ext^{i}_{\A}(M,N)=0$ for
any $i\geq 3$.
\end{proof}

\begin{remark}
Define
$F_{M}:=v^{\lr{\tau(M),\hat{M}}}K_{-\hat{M}_{\rm {even}}}K_{-\hat{M}_{\rm{odd}}}^\ast [C_M^\ast],$ and
let $\varphi^\ast$ be the $\mathbb{C}$-linear map defined by
$\varphi^\ast: {\mathcal {H}}_{\rm{tw}}(\mathcal{A})\longrightarrow \mathcal {D}\mathcal {H}_2(\mathcal{A}), [M]\longmapsto F_M$. Then $(1)$ $\varphi^\ast$ is injective.
$(2)$ For any $M,N\in\A$,
$\varphi^\ast([M]\ast[N])=\varphi^\ast([M])\ast\varphi^\ast([N])$ if and only if $\Ext^i_{\A}(M,N)=0$ for any $i\geq 3$.
\end{remark}

In what follows, we only give the statements of results on $\varphi$, although they also hold for $\varphi^*$.
The following corollary is the main result of \cite{GP}.

\begin{corollary}
If $\A$ is of global dimension at most two. Then $\varphi$ is an embedding of ${\mathcal {H}}_{\rm{tw}}(\mathcal{A})$ into $\mathcal {D}\mathcal {H}_2(\mathcal{A})$.
\end{corollary}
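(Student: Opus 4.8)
The plan is to deduce this corollary directly from Theorem~\ref{zd} together with Proposition~\ref{zm}. First I would recall that $\varphi$ is already known to be injective by Proposition~\ref{zm}, so the only thing left to establish is that $\varphi$ is an algebra homomorphism, i.e. that $\varphi([M]\ast[N])=\varphi([M])\ast\varphi([N])$ for all objects $M,N\in\A$. Since the products $[M]\ast[N]$ for $M,N\in\Iso(\A)$, together with the structure constants of ${\mathcal {H}}_{\rm tw}(\A)$, determine the multiplication, and $\varphi$ is linear, it suffices to verify the multiplicativity on all pairs of isoclasses $[M],[N]$.

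Next I would invoke the hypothesis: $\A$ has global dimension at most two. By definition of global dimension this means $\Ext^i_\A(X,Y)=0$ for all objects $X,Y\in\A$ and all $i\geq 3$. In particular this holds for $X=M$, $Y=N$ for every pair $M,N\in\A$. Therefore the ``if'' direction of Theorem~\ref{zd} applies to every such pair, giving $\varphi([M]\ast[N])=\varphi([M])\ast\varphi([N])$ for all $M,N\in\A$. Combining this with the $\mathbb{C}$-linearity of $\varphi$ shows that $\varphi$ respects the full multiplication of ${\mathcal {H}}_{\rm tw}(\A)$, hence is an algebra homomorphism; together with the injectivity from Proposition~\ref{zm}, $\varphi$ is an embedding of algebras. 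The proof for $\varphi^\ast$ is identical, using the Remark in place of Theorem~\ref{zd} and Proposition~\ref{zm}.

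There is essentially no obstacle here: the entire content has been front-loaded into Theorem~\ref{zd} and Proposition~\ref{zm}, and this corollary is purely a matter of observing that the global-dimension-$\leq 2$ hypothesis is exactly the case in which the cohomological vanishing condition $\Ext^i_\A(M,N)=0$ for $i\geq 3$ is automatic for \emph{all} pairs, so that the ``only if'' clause of Theorem~\ref{zd} never obstructs multiplicativity. The one small point worth stating carefully is that being an algebra homomorphism on all products $[M]\ast[N]$ of basis elements, plus linearity, genuinely implies being an algebra homomorphism on all of ${\mathcal {H}}_{\rm tw}(\A)$ --- but this is immediate from bilinearity of the two multiplications. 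I would also remark that this recovers exactly \cite[Theorem 3.7]{GP}, and that for hereditary $\A$ (global dimension at most one) one recovers Bridgeland's original embedding.
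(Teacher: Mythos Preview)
Your proposal is correct and is exactly the intended argument: the paper states this corollary without proof, treating it as immediate from Proposition~\ref{zm} (injectivity) and the ``if'' direction of Theorem~\ref{zd} (multiplicativity on basis elements, since global dimension at most two forces $\Ext^i_{\A}(M,N)=0$ for all $i\geq 3$). Your write-up spells out precisely what the paper leaves implicit.
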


In the following we apply our main result to iterated tilted algebras. We recall that an algebra $A$ is called an \emph{iterated tilted algebra} if there exists a sequence $(A_{i-1},T_{i-1},A_i)$, $i=1,2,\cdots,m$, where each $A_i$ is a finite dimensional $k$-algebra and $T_{i-1}$ is a (classical) tilting $A_{i-1}$-module (cf. \cite[Section VI.2]{ASS}), such that $A_0$ is hereditary, $A_i=\End_{A_{i-1}}(T_{i-1})$, and $A_m=A$. For convenience, we call $A_i$ \emph{ith tilted algebra}. We denote ${\mathcal {H}}_{\rm{tw}}(\mod A)$ and $\mathcal {D}\mathcal {H}_2(\mod A)$ by ${\mathcal {H}}_{\rm{tw}}(A)$ and $\mathcal {D}\mathcal {H}_2(A)$, respectively. It is well-known that $A_1$ is of global dimension at most two, so $$\varphi: {\mathcal {H}}_{\rm{tw}}(A)\longrightarrow \mathcal {D}\mathcal {H}_2(A),~~[M]\longmapsto E_M$$ is an embedding of algebras.

By the Brenner-Butler theorem (cf. \cite{ASS}), we know that the functors $\Hom_{A_1}(T_1,-)$ and $-\otimes_{A_2}T_1$ induce mutually inverse equivalences between the following two full subcategories of $\mod{A_1}$ and $\mod{A_2}$, respectively,
\begin{equation}
\mathcal {T}(T_1)=\{M~|~\Ext_{A_1}^1(T_1,M)=0\}\quad \mbox{and}\quad\mathcal {Y}(T_1)=\{N~|~\Tor_1^{A_2}(N,T)=0 \}.
\end{equation}
While the functors $\Ext_{A_1}^1(T,-)$ and $\Tor_1^{A_2}(-,T)$ induce mutually inverse equivalences between the following two full subcategories of $\mod{A_1}$ and $\mod{A_2}$, respectively,
\begin{equation}\label{XF}
\mathcal {F}(T_1)=\{M~|~\Hom_{A_1}(T_1,M)=0\}\quad \mbox{and}\quad\mathcal {X}(T_1)=\{N~|~N \otimes_{A_2} T_1=0 \}.
\end{equation}

\begin{corollary}
Let $A_2$ be the 2th tilted algebra as above. Then there exist injective homomorphisms $$\varphi_1: {\mathcal {H}}_{\rm{tw}}(\mathcal {X}(T_1))\longrightarrow\mathcal {D}\mathcal {H}_2(A_2),[M]\longmapsto E_M$$ and $$\varphi_2: {\mathcal {H}}_{\rm{tw}}(\mathcal {Y}(T_1))\longrightarrow\mathcal {D}\mathcal {H}_2(A_2),[M]\longmapsto E_M.$$
\end{corollary}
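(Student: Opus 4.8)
The plan is to exhibit, for each of the two subcategories $\mathcal{X}(T_1)$ and $\mathcal{Y}(T_1)$, that the restriction of $\varphi$ (with the same formula $[M]\mapsto E_M$) already is an injective algebra homomorphism, by invoking Theorem \ref{zd} together with the homological vanishing built into the Brenner--Butler setup. Recall from \eqref{XF} that $\mathcal{X}(T_1)$ is equivalent, via $\Tor_1^{A_2}(-,T_1)$, to $\mathcal{F}(T_1)\subset\Mod A_1$, and $\mathcal{Y}(T_1)$ is equivalent, via $-\otimes_{A_2}T_1$, to $\mathcal{T}(T_1)\subset\Mod A_1$. Since $A_1$ has global dimension at most two, the earlier corollary says $\varphi$ is already an embedding of algebras on all of $\mathcal{H}_{\rm{tw}}(A_1)$; the subtlety is that $\mathcal{H}_{\rm{tw}}(\mathcal{X}(T_1))$ and $\mathcal{H}_{\rm{tw}}(\mathcal{Y}(T_1))$ are Hall algebras of subcategories of $\Mod A_2$, not of $\Mod A_1$, and $A_2$ may have global dimension larger than two.

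First I would check that $\mathcal{X}(T_1)$ and $\mathcal{Y}(T_1)$ are each closed under extensions in $\Mod A_2$: this is standard torsion/torsion-free formalism, since $(\mathcal{X}(T_1),\mathcal{Y}(T_1))$ is a torsion pair in $\Mod A_2$ with $\mathcal{X}(T_1)$ the torsion class and $\mathcal{Y}(T_1)$ the torsion-free class (cf. \cite{ASS}). Hence $\mathcal{H}_{\rm{tw}}(\mathcal{X}(T_1))$ and $\mathcal{H}_{\rm{tw}}(\mathcal{Y}(T_1))$ are genuine subalgebras of $\mathcal{H}_{\rm{tw}}(A_2)$, and the $\mathbb{C}$-linear maps $\varphi_1,\varphi_2$ are well defined as restrictions of $\varphi\colon\mathcal{H}_{\rm{tw}}(A_2)\to\mathcal{D}\mathcal{H}_2(A_2)$. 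Injectivity of $\varphi_1$ and $\varphi_2$ is then immediate from Proposition \ref{zm}, which gives injectivity of $\varphi$ on all of $\mathcal{H}_{\rm{tw}}(A_2)$.

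The heart of the matter is multiplicativity, and by Theorem \ref{zd} it suffices to prove: for $M,N\in\mathcal{X}(T_1)$ one has $\Ext^i_{A_2}(M,N)=0$ for all $i\geq 3$, and likewise for $M,N\in\mathcal{Y}(T_1)$. This is exactly where I expect the main obstacle. The idea is to transport the Ext-computation back to $\Mod A_1$ using the Brenner--Butler equivalences. For $M,N\in\mathcal{Y}(T_1)$ with $M'=M\otimes_{A_2}T_1$, $N'=N\otimes_{A_2}T_1\in\mathcal{T}(T_1)$, one uses the comparison of Ext-groups under a tilting equivalence: there is a long-exact-sequence / spectral-sequence relation (the standard one coming from the derived equivalence $\mathcal{D}^b(A_1)\simeq\mathcal{D}^b(A_2)$ induced by $T_1$, under which $\mathcal{T}(T_1)$ and $\mathcal{Y}(T_1)$ correspond up to a shift) giving $\Ext^i_{A_2}(M,N)$ in terms of $\Ext^{i}_{A_1}(M',N')$ and $\Ext^{i-1}_{A_1}$ of a shifted object, or more precisely fitting into exact sequences with terms $\Ext^j_{A_1}(M',N')$ for $j$ near $i$. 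Since $\gl. A_1\leq 2$, all $\Ext^{j}_{A_1}$ vanish for $j\geq 3$, and a short bookkeeping argument on the range of surviving indices forces $\Ext^i_{A_2}(M,N)=0$ for $i\geq 3$. The argument for $\mathcal{X}(T_1)$ is the mirror image, using that $\mathcal{X}(T_1)$ corresponds to $\mathcal{F}(T_1)$ under the same derived equivalence (with the complementary shift). Once this vanishing is in hand, Theorem \ref{zd} applied inside $\mathcal{D}\mathcal{H}_2(A_2)$ yields $\varphi_j([M]\ast[N])=\varphi_j([M])\ast\varphi_j([N])$ for all $M,N$ in the respective subcategory, so $\varphi_1$ and $\varphi_2$ are algebra homomorphisms; combined with the injectivity above, the proof is complete. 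The delicate point to get right is precisely the index bookkeeping in the tilting comparison of Ext-groups — one must verify that no $\Ext^i_{A_2}$ with $i\geq 3$ can pick up a contribution from $\Ext^{\leq 2}_{A_1}$ after the degree shift, which is where I would spend the most care.
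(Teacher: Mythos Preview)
Your approach is essentially the same as the paper's: restrict $\varphi$ to the extension-closed subcategories, invoke Proposition~\ref{zm} for injectivity, and use Theorem~\ref{zd} together with Brenner--Butler to kill $\Ext^{\geq 3}_{A_2}$. The one place you over-complicate things is the Ext comparison. Because both $M$ and $N$ lie in the \emph{same} half of the torsion pair, the derived equivalence $\mathbf{R}\Hom_{A_1}(T_1,-)\colon\mathcal{D}^b(A_1)\to\mathcal{D}^b(A_2)$ carries $M',N'\in\mathcal{F}(T_1)$ to $M[-1],N[-1]$ (and $M',N'\in\mathcal{T}(T_1)$ to $M,N$ with no shift), so the shifts cancel and one gets a clean degree-by-degree isomorphism
\[
\Ext^i_{A_2}(M,N)\cong\Ext^i_{A_1}(M',N')\quad\text{for all }i\geq 0.
\]
No spectral sequence or index bookkeeping is needed, and $\gl.\dim A_1\leq 2$ finishes immediately; this is exactly what the paper does. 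Your additional remark that $(\mathcal{X}(T_1),\mathcal{Y}(T_1))$ is a torsion pair, hence each class is extension-closed and the twisted Hall algebras are genuine subalgebras, is a welcome clarification the paper leaves implicit.
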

\begin{proof}
Let $M,N\in\mathcal {X}(T_1)$. By the Brenner-Butler theorem, for any $i\geq0$,
$$\Ext_{A_2}^{i}(M,N)\cong\Ext_{A_1}^{i}(M',N')$$ for some $M',N'\in\mathcal {F}(T_1)$.
For any $i\geq3$, since $A_1$ is of global dimension at most two, it follows that $\Ext_{A_1}^{i}(M',N')=0$ and then $\Ext_{A_2}^{i}(M,N)=0$. Then by Proposition \ref{zm} and Theorem \ref{zd}, $\varphi_1$ is an embedding of algebras. Similarly, we can prove the second embedding.
\end{proof}

\begin{example} Let $Q$ be the quiver
 $$\xymatrix{
           1\ar[r]&2\ar[r]&3\cdots \ar[r]&n+1}$$
and let $A=kQ/I$, where $I$ is the ideal of $kQ$
generated by all paths of length $2$. Observe that for any $a,b\in\mathbb{N}$, $0<\mid i-j\mid\leq 2$ and $t\geq 3$,
$\Ext^t(aS_i,bS_j)=0$. Let $N=1-(S_i,S_{j})$,
by Theorem \ref{zd}, we have the Serre relations
$$\sum\limits_{t=0}^{N}(-1)^t{\scriptsize\left[\begin{array}{cc}
    N\\
   t\end{array}\right]_v}E_{S_i}^{(t)}E_{S_j}E_{S_i}^{(N-t)}=0,$$
since $\sum\limits_{t=0}^{N}(-1)^t{\scriptsize\left[\begin{array}{cc}
    N\\
   t\end{array}\right]_v}[S_i]^{(t)}[S_j][S_i]^{(N-t)}=0$ holds in ${\mathcal {H}}_{\rm{tw}}(A)$ (cf. \cite[Section 10.2]{Deng}).
\end{example}

\section*{Acknowledgments}

The authors are grateful to Bangming Deng for his stimulating discussions and valuable comments.

\end{document}